\documentclass{amsart}
\usepackage{amsmath}
\usepackage{amssymb}
\usepackage{amscd}
\usepackage[all]{xy}
\usepackage{color}
\usepackage{tikz}
\usetikzlibrary{shapes,arrows.meta, positioning}
\newtheorem{theorem}{Theorem}[section]
\newtheorem{lemma}[theorem]{Lemma}
\newtheorem{corollary}[theorem]{Corollary}
\newtheorem{proposition}[theorem]{Proposition}

\newtheorem{example}[theorem]{Example}
\newtheorem{remark}[theorem]{Remark}

\mathchardef\mhyphen="2D

\begin{document}
\title[$S$-Noetherian modules and $S$-strong Mori modules]
{On $S$-Noetherian modules and $S$-strong Mori modules}

\author [H. Baek] {Hyungtae Baek}
\address{(Baek) School of Mathematics,
Kyungpook National University, Daegu 41566,
Republic of Korea}
\email{htbaek5@gmail.com}

\author [J.W. Lim] {Jung Wook Lim}
\address{(Lim) Department of Mathematics,
College of Natural Sciences,
Kyungpook National University, Daegu 41566,
Republic of Korea}
\email{jwlim@knu.ac.kr}

\thanks{Words and phrases: $S$-Noetherian module,
$S$-strong Mori module, Hilbert basis theorem, ($t$-)Nagata module}

\thanks{$2020$ Mathematics Subject Classification: 13A15, 13B25, 13E99}

\begin{abstract}
In this paper, we study some properties of
$S$-Noetherian modules and $S$-strong Mori modules.
Among other things, we prove the Hilbert basis theorem for
$S$-Noetherian modules and $S$-strong Mori modules.
\end{abstract}

\maketitle

\section{Introduction}

In this paper,
$R$ always denotes a commutative ring with identity,
$S$ is a (not necessarily saturated) multiplicative subset of $R$
and $M$ stands for a unitary $R$-module.
(For the sake of avoiding the confusion,
we use $D$ instead of $R$ when $R$ is an integral domain.)

Recall that $M$ is a {\it Noetherian module} if
every submodule of $M$ is finitely generated
(or equivalently, the ascending chain condition on submodules of $M$ holds)
and $R$ is a {\it Noetherian ring} if
$R$ is a Noetherian $R$-module.
In \cite{wang1}, Wang and McCasland introduced new algebraic objects
whose classes contain those with Noetherian property.
They defined a $w$-module $M$ to be a {\it strong Mori module} (SM-module) if
$M$ satisfies the ascending chain condition on $w$-submodules of $M$
(or equivalently, each $w$-submodule of $M$ is $w$-finite),
where $w$ denotes the so-called $w$-operation on $M$.
(Recall that a $w$-module $M$ is {\em $w$-finite} if
there exists a finitely generated submodule $F$ of $M$
such that $M = F_w$.)
Also, $D$ is said to be a {\em strong Mori domain} (SM-domain) if
$D$ is an SM-module as a $D$-module.

In \cite{anderson}, Anderson and Dumitrescu generalized
the concepts of the Noetherian rings and the Noetherian modules using multiplicative sets.
Authors defined a submodule $N$ of $M$ to be {\it $S$-finite} if there exist an $s \in S$ and
a finitely generated submodule $F$ of $M$ such that
$Ns \subseteq F \subseteq N$, while
an ideal $I$ of $R$ is {\it $S$-finite} if
$I$ is $S$-finite as an $R$-module.
Also, $M$ is {\it $S$-Noetherian} if
every submodule of $M$ is $S$-finite, while
$R$ is an {\it $S$-Noetherian ring} if
$R$ is $S$-Noetherian as an $R$-module.
The readers can refer to \cite{anderson, hamed1, lim, lo1, lo2, liu}
for $S$-Noetherian rings and $S$-Noetherian modules.
In \cite{kim}, Kim, Kim and Lim generalized
the concepts of SM-domains and SM-modules
using multiplicative sets.
They defined a submodule $N$ of $M$ to be {\it $S$-$w$-finite} if
there exist an $s \in S$ and a finitely generated submodule $F$ of $M$ such that
$Ns \subseteq F_w \subseteq N_w$,
while an ideal $I$ of $D$ is {\it $S$-$w$-finite} if
$I$ is $S$-$w$-finite as a $D$-module.
Also, a $w$-module $M$ is an {\it $S$-strong Mori module} ($S$-SM-module) if
every $w$-submodule of $M$ is $S$-$w$-finite; and
$D$ is an {\it $S$-strong Mori domain} ($S$-SM-domain) if
$D$ is an $S$-SM-module over $D$.
The readers can refer to \cite{chang, chang1, hamed, kim, wang2, wang1}
for ($S$-)SM-domains and ($S$-)SM-modules.

Recall that
$R$ is a Noetherian ring if and only if $R[X]$ is a Noetherian ring; and
$M$ is Noetherian if and only if $M[X]$ is Noetherian
\cite[Theorem 7.5 and Chapter 7, Exercise 10]{atiyah}.
This is well known as Hilbert basis theorem.
In \cite{anderson}, Anderson and Dumitrescu proved
the Hilbert basis theorem for $S$-Noetherian rings,
which states that
if $S$ is an anti-Archimedean subset of $R$, then
$R$ is an $S$-Noetherian ring if and only if
$R[X]$ is an $S$-Noetherian ring \cite[Proposition 9]{anderson}.
Also, Chang proved the Hilbert basis theorem for
SM-domains and SM-modules in \cite{chang, chang1}; that is,
$D$ is an SM-domain if and only if
$D[X]$ is an SM-domain \cite[Theorem 2.2]{chang}; and
for $w$-module $M$,
$M$ is an SM-module over $D$ if and only if
$M[X]$ is an SM-module over $D[X]$ \cite[Theorem 2.5]{chang1}.
In \cite{kim}, the authors proved
the Hilbert basis theorem for $S$-SM-domain,
which states that if $S$ is an anti-Archimedean subset of $D$,
then $D$ is an $S$-SM-domain if and only if
$D[X]$ is an $S$-SM-domain \cite[Theorem 2.8]{kim}.
The main purpose of this paper is
to prove the Hilbert basis theorem for $S$-Noetherian modules and
$S$-SM-modules.
To summarize,
we present the following diagram.

\vspace{.3cm}

\begin{center}
\begin{tikzpicture}[scale=0.1, node distance=1.8cm, thick, nodes={draw, rectangle, align=center, minimum width=1cm, minimum height=0.5cm}, arrows=-{Stealth[scale=1.2]}, shorten >=3pt, shorten <=3pt]
    \node (HBT) {\footnotesize Hilbert\\ \footnotesize basis\\ \footnotesize theorem};
    \node[right=1.5cm of HBT, draw=white] (blank) {\color{white}{\footnotesize blank}};
    \node[above=0.2cm of blank] (S-Noetherian ring) {\footnotesize Hilbert basis theorem\\\footnotesize for $S$-Noetherian rings};
    \node[below=0.2cm of blank] (SM-domain) {\footnotesize Hilbert basis theorem\\ \footnotesize for SM-domains};
    \node[right=1.77cm of SM-domain, draw=white] (blank2) {\color{white}{\footnotesize blank2}};
    \node[above=0.2cm of blank2] (SM-module) {\footnotesize Hilbert basis theorem\\ \footnotesize for SM-modules};
    \node[below=0.2cm of blank2] (S-SM-domain) {\footnotesize Hilbert basis theorem\\ \footnotesize for $S$-SM-domains};
    \node[right=1.77cm of blank2, draw=blue] (S-SM-module) {\footnotesize Hilbert basis theorem\\ \footnotesize for $S$-SM-modules};
    \node[right=4.08cm of S-Noetherian ring, draw=blue] (S-Noetherian) {\footnotesize Hilbert basis theorem\\ \footnotesize  for $S$-Noetherian modules};
    \draw (HBT.east) -- (S-Noetherian ring.west);
    \draw (HBT.east) -- (SM-domain.west);
    \draw (SM-domain.east) -- (SM-module.west);
    \draw (SM-domain.east) -- (S-SM-domain.west);
    \draw (S-SM-domain.east) -- (S-SM-module.west);
    \draw (SM-module.east) -- (S-SM-module.west);
    \draw (S-Noetherian ring.east) -- (S-Noetherian.west);
\end{tikzpicture}
\end{center}

\vspace{.2cm}

This paper consists of three sections including introduction.
In Section \ref{sec2},
we investigate some basic properties of quotient modules and
$S$-Noetherian modules.
We define a module which has finite character and
show that if $M$ is a locally $S$-Noetherian module which has finite character,
then $M$ is an $S$-Noetherian module (Proposition \ref{locally S-Noe}).
We also show that
the Hilbert basis theorem for $S$-Noetherian module
when $S$ is an anti-Archimedean subset of $R$,
{\it i.e.},
$M$ is an $S$-Noetherian $R$-module if and only if
$M[X]$ is an $S$-Noetherian $R[X]$-module if and only if
$M[X]_N$ is an $S$-Noetherian $R[X]_N$-module (Theorem \ref{main1}).
In Section \ref{sec3},
we study some properties of $w$-submodules and $S$-SM-modules.
Also, we define a module which has finite $w$-character
and then we show that
if $M$ is an $S$-SM-module,
then $M$ is a $w$-locally $S$-Noetherian module; and
if $M$ is a $w$-locally $S$-Noetherian module which has finite $w$-character,
then $M$ is an $S$-SM-module (Proposition 3.6).
Finally, we show that the Hilbert basis theorem for $S$-SM-modules
when $S$ is an anti-Archimedean subset of $D$, {\it i.e.},
$M$ is an $S$-SM-module over $D$ if and only if
$M[X]$ is an $S$-SM-module over $D[X]$ if and only if
$M[X]_{N_v}$ is an $S$-SM-module over $D[X]_{N_v}$ if and only if
$M[X]_{N_v}$ is an $S$-Noetherian $D[X]_{N_v}$-module (Theorem \ref{main2}).

To help readers better understanding this paper,
we review some definitions and notations related to star-operations.
Let $D$ be an integral domain with quotient field $K$,
${\bf F}(D)$ the set of nonzero fractional ideals of $D$ and
${\bf T}(D)$ the set of nonzero torsion-free $D$-modules.
For an $I \in {\bf F}(D)$,
set $I^{-1} := \{a \in K \,|\, aI \subseteq D\}$.
The mapping on ${\bf F}(D)$ defined by $I \mapsto I_v := (I^{-1})^{-1}$
is called the {\it $v$-operation},
and the mapping on ${\bf F}(D)$ defined by
$I \mapsto I_t :=
\bigcup \{J_v \,|\, J$ is a nonzero finitely generated fractional subideal of $I \}$
is called the {\it $t$-operation}.
An ideal $I$ of $D$ is a {\it $v$-ideal} (respectively, $t$-ideal) if
$I_v = I$ (respectively, $I_t = I$).
An ideal $J$ of $D$ is a {\it Glaz–Vasconcelos ideal} (GV-ideal),
and denoted by $J \in {\rm GV}(D)$ if
$J$ is finitely generated and $J_v = D$.
For each $M \in {\bf T}(D)$,
{\it $w$-envelop} of $M$ is the set
$M_{w_D} := \{ x \in M \otimes K \,|\, xJ \subseteq M {\rm \ for\ some\ } J \in {\rm GV}(D)\}$.
If there is no confusion,
we simply write $w$ for $w_D$.
The mapping on ${\bf T}(D)$ defined by
$M \mapsto M_w$ is called the {\it $w$-operation}.
An element $M \in {\bf T}(D)$ is a {\it $w$-module} if $M_w = M$,
while an ideal $I$ of $D$ is a {\it $w$-ideal} if
$I$ is a $w$-module as a $D$-module.
Let $*$ be the $t$-operation or the $w$-operation on $D$.
Then a proper ideal $I$ of $D$ is said to be a {\it maximal $*$-ideal} of $D$ if
there does not exist a proper $*$-ideal which properly contains $I$.
Let $*$-${\rm Max}(D)$ be the set of maximal $*$-ideals of $D$.
Then it is easy to see that if $D$ is not a field, then $*$-${\rm Max}(D) \neq \emptyset$.
The useful facts in this paper,
$t\mhyphen{\rm Max}(D) = w\mhyphen{\rm Max}(D)$ \cite[Theorem 2.16]{anderson1} and
$M_w = \bigcap_{\mathfrak{m} \in t\mhyphen{\rm Max}(D)} M_{\mathfrak{m}}$
for all nonzero $D$-modules $M$ \cite[Theorem 4.3]{anderson1}.
The readers can refer to \cite{anderson1, kang, wang} for star-operations.

\section{$S$-Noetherian modules}\label{sec2}

Let $R$ be a commutative ring with identity and
let $S$ and $T$ be multiplicative subsets of $R$.
Then $S_T=\{\frac{s}{t} \,|\, s \in S$ and $t \in T\}$ is
a multiplicative subset of $R_T$.
We start this section with simple results for a submodule of quotient modules and
a quotient module of $S$-Noetherian modules.

\begin{lemma}\label{quotient 1}
Let $R$ be a commutative ring with identity
and let $S$ and $T$ be multiplicative subsets of $R$.
Let $M$ be a unitary $R$-module.
Then the following assertions hold.
\begin{enumerate}
\item[(1)] If $A$ is an $R_T$-submodule of $M_T$,
then $A=L_T$ for some $R$-submodule $L$ of $M$.
\item[(2)] If $M$ is an $S$-Noetherian $R$-module,
then $M_T$ is an $S_T$-Noetherian $R_T$-module.
Furthermore, if $T$ consists of regular elements of $R$,
then $M_T$ is an $S$-Noetherian $R_T$-module.
\end{enumerate}
\end{lemma}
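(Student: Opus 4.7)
The plan is to handle (1) by the standard contraction construction that relates submodules of a localization to submodules of the original module, and then to derive (2) by a direct transfer of the $S$-finiteness data through localization.

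For part (1), I would define $L := \{m \in M \,|\, m/1 \in A\}$, the preimage of $A$ under the canonical map $M \to M_T$, $m \mapsto m/1$. A quick verification shows $L$ is an $R$-submodule of $M$. To prove $A = L_T$, note first that if $m \in L$ and $t \in T$, then $m/t = (1/t)(m/1) \in A$ because $A$ is an $R_T$-submodule, giving $L_T \subseteq A$. Conversely, given any $x = m/t \in A$ with $m \in M$ and $t \in T$, the element $m/1 = (t/1)\,x$ lies in $A$, so $m \in L$, and therefore $x = m/t \in L_T$.

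For the first assertion of (2), let $A$ be an arbitrary $R_T$-submodule of $M_T$. By (1), write $A = L_T$ for some $R$-submodule $L$ of $M$. Since $M$ is $S$-Noetherian, $L$ is $S$-finite, so there exist $s \in S$ and a finitely generated $R$-submodule $F$ of $M$ with $Ls \subseteq F \subseteq L$. Localizing at $T$ yields $A(s/1) = (L_T)(s/1) \subseteq F_T \subseteq L_T = A$, and $F_T$ is a finitely generated $R_T$-submodule of $M_T$. Because $s/1 \in S_T$, this exhibits $A$ as $S_T$-finite, so $M_T$ is an $S_T$-Noetherian $R_T$-module. For the supplementary statement, when $T$ consists of regular elements of $R$ the canonical map $R \to R_T$ is injective, so $S$ may be regarded as a multiplicative subset of $R_T$. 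Reading the inclusion $sA \subseteq F_T \subseteq A$ inside $M_T$ with $s \in S \subseteq R_T$ and $F_T$ finitely generated over $R_T$ then shows that $A$ is $S$-finite as an $R_T$-module, proving that $M_T$ is $S$-Noetherian as an $R_T$-module.

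No step is a genuine obstacle; the whole argument is a routine application of the localization/submodule correspondence. The only point needing mild care is the closing assertion, where one must keep straight that $S$ is being regarded as a subset of $R_T$ via the (necessarily injective) canonical ring map, which is exactly where the regularity hypothesis on $T$ is used.
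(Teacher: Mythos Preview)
Your proof is correct and follows essentially the same approach as the paper: in (1) you take $L$ to be the preimage of $A$ under the canonical map $M\to M_T$ (the paper writes this map as $m\mapsto mt/t$, which equals $m/1$) and verify both inclusions, and in (2) you localize the $S$-finiteness data for $L$ and then use injectivity of $R\hookrightarrow R_T$ when $T$ consists of regular elements, exactly as the paper does. The only cosmetic difference is that the paper witnesses $S_T$-finiteness with $s/t$ for a chosen $t\in T$ while you use $s/1$, which is immaterial since $1\in T$.
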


\begin{proof}
(1) Suppose that $A$ is an $R_T$-submodule of $M_T$
and let $t$ be any element of $T$.
Note that $M_T$ is an $R$-module and
the map $\varphi_t : M \to M_T$
given by $\varphi_t(m) = \frac{mt}{t}$ is an $R$-module homomorphism.
Let $L=\varphi_t^{-1}(A)$.
Then $L$ is an $R$-submodule of $M$.
Let $\frac{\ell}{v} \in L_T$, where $\ell \in L$ and $v \in  T$.
Then $\varphi_t(\ell) \in A$,
so $\frac{\ell}{v} = \frac{\ell t}{t}\frac{t}{tv}=\varphi_t(\ell)\frac{t}{tv} \in A$.
Hence $L_T \subseteq A$.
For the reverse containment,
let $\ell \in M$ and $v \in T$ with $\frac{\ell}{v}\in A$.
Then $\varphi_t(\ell)=\frac{\ell t}{t} = \frac{\ell}{v}\frac{tv}{t} \in A$,
so $\ell \in \varphi_t^{-1}(A)=L$.
This implies that $\frac{\ell}{v} \in L_T$.
Hence $A \subseteq L_T$,
and thus $A=L_T$.

(2) Let $A$ be an $R_T$-submodule of $M_T$.
Then by (1), $A=L_T$ for some $R$-submodule $L$ of $M$.
Since $M$ is an $S$-Noetherian $R$-module,
there exist $s \in S$ and
$\ell_1, \dots, \ell_n \in L$ such that
\begin{center}
$Ls \subseteq \ell_1 R + \cdots + \ell_n R$.
\end{center}
Fix an element $t \in T$.
Note that $(Ls)_T = L_T \frac{s}{t}$ and
$(\ell_kR)_T = \frac{\ell_k}{t}R_T$ for all $1 \leq k \leq n$, so we have
\begin{center}
$A\frac{s}{t} = L_T\frac{s}{t} = (Ls)_T
\subseteq (\ell_1R + \cdots + \ell_nR)_T
= \frac{\ell_1}{t} R_T  + \cdots + \frac{\ell_n}{t} R_T
\subseteq L_T = A$.
\end{center}
Hence $A$ is an $S_T$-finite $R_T$-submodule of $M_T$,
which means that $M_T$ is an $S_T$-Noetherian $R_T$-module.

Note that if $T$ consists of regular elements of $R$,
then $R$ can be naturally embedded in $R_T$.
Hence we may assume that $S$ is a multiplicative subset of $R_T$.
Thus the second argument holds.
\end{proof}

Let $R$ be a commutative ring with identity and
let $P$ be a prime ideal of $R$.
Then $S:=R \setminus P$ is a (saturated) multiplicative subset of $R$.
Let $M$ be a unitary $R$-module.
We say that $M$ is {\em $P$-finite} if
$M$ is $S$-finite; and
$M$ is a {\em $P$-Noetherian module} if
$M$ is an $S$-Noetherian module.
For an element $r \in R$ and an $R$-submodule $L$ of $M$,
we set $L:r=\{x \in M \,|\, xr \in L\}$. 
It is easy to see that $L:r$ is an $R$-submodule of $M$ containing $L$.

\begin{proposition}
Let $R$ be a commutative ring with identity,
$\mathfrak{m}$ a maximal ideal of $R$ and
$M$ a torsion-free unitary $R$-module.
Then the following assertions are equivalent.
\begin{enumerate}
\item[(1)]
$M$ is an $\mathfrak{m}$-Noetherian module.
\item[(2)]
$M_{\mathfrak{m}}$ is a Noetherian $R_{\mathfrak{m}}$-module and
every nonzero finitely generated $R$-submodule $L$ of $M$,
there exists an element $s \in R \setminus \mathfrak{m}$ such that
$L_{\mathfrak{m}}\cap M = L:s$.
\end{enumerate}
\end{proposition}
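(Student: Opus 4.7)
The plan is to prove the two implications separately. In both directions, the torsion-freeness of $M$ is used to ensure that the canonical map $M \to M_{\mathfrak{m}}$ is injective, so that $M$ embeds into $M_{\mathfrak{m}}$ and formal equalities in $M_{\mathfrak{m}}$ involving elements of $M$ can be pulled back to genuine equalities in $M$ after clearing denominators.

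For $(1) \Rightarrow (2)$, to see that $M_{\mathfrak{m}}$ is Noetherian I would apply Lemma \ref{quotient 1}(2) with $T = R \setminus \mathfrak{m}$; since $S_T$ then consists of units of $R_{\mathfrak{m}}$, the notion of $S_T$-Noetherian degenerates to the ordinary Noetherian condition. For the second clause, fix a nonzero finitely generated submodule $L$ of $M$ and set $N := L_{\mathfrak{m}} \cap M$. The inclusion $L : s \subseteq N$ holds for every $s \in R \setminus \mathfrak{m}$ and is routine. For the reverse inclusion, invoke the $\mathfrak{m}$-Noetherian hypothesis: $N$ is $\mathfrak{m}$-finite, so there exist $t \in R \setminus \mathfrak{m}$ and a finitely generated submodule $F = y_1 R + \cdots + y_n R \subseteq N$ with $Nt \subseteq F$. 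Each $y_i \in L_{\mathfrak{m}}$ can be written as $\ell_i/u_i$ with $\ell_i \in L$ and $u_i \in R \setminus \mathfrak{m}$, and torsion-freeness upgrades this to the genuine equality $y_i u_i = \ell_i$ in $M$. Setting $s := t u_1 \cdots u_n \in R \setminus \mathfrak{m}$ then gives $Ns \subseteq L$, i.e., $N \subseteq L : s$, as desired.

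For $(2) \Rightarrow (1)$, let $N$ be any submodule of $M$. The case $N = 0$ is trivial; otherwise, since $M_{\mathfrak{m}}$ is Noetherian, $N_{\mathfrak{m}}$ is finitely generated over $R_{\mathfrak{m}}$, and after clearing denominators I may choose generators $n_1, \dots, n_k \in N$. Let $L := n_1 R + \cdots + n_k R$; this is a nonzero finitely generated submodule of $N$ satisfying $L_{\mathfrak{m}} = N_{\mathfrak{m}}$. Applying the hypothesis to $L$ produces $s \in R \setminus \mathfrak{m}$ with $L_{\mathfrak{m}} \cap M = L : s$. Using the embedding $M \hookrightarrow M_{\mathfrak{m}}$, one has $N \subseteq M \cap N_{\mathfrak{m}} = M \cap L_{\mathfrak{m}} = L : s$, so $Ns \subseteq L \subseteq N$, proving that $N$ is $\mathfrak{m}$-finite.

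The main obstacle I anticipate is in $(1) \Rightarrow (2)$: a single $s$ must work for every element of $L_{\mathfrak{m}} \cap M$ simultaneously, not just for each element individually. The $\mathfrak{m}$-Noetherian hypothesis is used precisely to reduce this uniformity requirement to a finite denominator-clearing problem on the generators $y_1, \dots, y_n$ of $F$, which is then settled by taking a single product $u_1 \cdots u_n$.
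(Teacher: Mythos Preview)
Your proposal is correct and follows essentially the same route as the paper's proof: both directions hinge on applying the $\mathfrak{m}$-Noetherian hypothesis to $L_{\mathfrak{m}}\cap M$ and then clearing finitely many denominators to produce the single element $s$, and the converse proceeds by choosing generators of $N_{\mathfrak{m}}$ inside $N$ and invoking the second clause of (2) on the submodule they generate. The only cosmetic difference is that for the Noetherian part of $(1)\Rightarrow(2)$ you appeal directly to Lemma~\ref{quotient 1}(2) (observing that $S_T$ consists of units), whereas the paper reproves this special case by hand.
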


\begin{proof}
(1) $\Rightarrow$ (2)
Let $A$ be a nonzero $R_{\mathfrak{m}}$-submodule of $M_{\mathfrak{m}}$.
Then by Lemma \ref{quotient 1}(1),
$A = B_{\mathfrak{m}}$
for some $R$-submodule $B$ of $M$.
Since $M$ is an $\mathfrak{m}$-Noetherian module,
there exist $s \in R \setminus \mathfrak{m}$ and
$b_1, \dots, b_n \in B$ such that
$Bs \subseteq b_1R + \cdots + b_nR$, so we obtain
\begin{eqnarray*}
B_{\mathfrak{m}} &=& (Bs)_{\mathfrak{m}}\\
&\subseteq& b_1R_{\mathfrak{m}} + \cdots + b_nR_{\mathfrak{m}}\\
&\subseteq & B_{\mathfrak{m}}.
\end{eqnarray*}
Hence $B_{\mathfrak{m}} = b_1R_{\mathfrak{m}} + \cdots + b_nR_{\mathfrak{m}}$.
Thus $M_{\mathfrak{m}}$ is a Noetherian $R_{\mathfrak{m}}$-module.
For the second argument,
let $L$ be a nonzero finitely generated $R$-submodule of $M$.
Since $L_{\mathfrak{m}} \cap M$ is an $R$-submodule of $M$,
there exist $u \in R \setminus \mathfrak{m}$ and
$c_1, \dots, c_m \in L_{\mathfrak{m}} \cap M$ such that
\begin{center}
$(L_{\mathfrak{m}} \cap M)u \subseteq  c_1R + \cdots + c_m R$.
\end{center}
For each $i=1, \dots, m$, take an element $t_i \in R\setminus \mathfrak{m}$
such that $c_it_i \in L$.
Let $t = t_1\cdots t_m$ and let $s=tu$.
Then $(c_1R + \cdots + c_mR)t \subseteq L$.
Hence we obtain
\begin{center}
$(L_{\mathfrak{m}} \cap M)s \subseteq (c_1R + \cdots + c_mR)t \subseteq L$.
\end{center}
Thus $L_{\mathfrak{m}} \cap M=L:s$.

(2) $\Rightarrow$ (1)
Let $L$ be a nonzero $R$-submodule of $M$.
Then $L_{\mathfrak{m}}$ is an $R_{\mathfrak{m}}$-submodule of $M_{\mathfrak{m}}$.
Since $M_{\mathfrak{m}}$ is a Noetherian $R_{\mathfrak{m}}$-module,
$L_{\mathfrak{m}} = a_1R_{\mathfrak{m}} + \cdots + a_nR_{\mathfrak{m}}$
for some $a_1, \dots, a_n \in L$.
Therefore by the assumption, we have
\begin{eqnarray*}
L &\subseteq & L_{\mathfrak{m}}\cap M \\
&=& (a_1R_{\mathfrak{m}} + \cdots + a_nR_{\mathfrak{m}}) \cap M\\
&=& (a_1R + \cdots + a_nR):s
\end{eqnarray*}
for some $s \in R \setminus \mathfrak{m}$.
Hence $Ls \subseteq a_1R + \cdots + a_nR$,
which means that $L$ is $\mathfrak{m}$-finite.
Thus $M$ is an $\mathfrak{m}$-Noetherian module.
\end{proof}

\begin{proposition}
Let $R$ be a commutative ring with identity and
let $M$ be a torsion-free unitary $R$-module.
Then the following conditions are equivalent.
\begin{enumerate}
\item[(1)]
$M$ is a Noetherian module.
\item[(2)]
$M$ is a $P$-Noetherian module for all $P \in {\rm Spec}(R)$.
\item[(3)]
$M$ is an $\mathfrak{m}$-Noetherian module for all $\mathfrak{m} \in {\rm Max}(R)$.
\end{enumerate}
\end{proposition}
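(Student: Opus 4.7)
The plan is to prove the cycle (1) $\Rightarrow$ (2) $\Rightarrow$ (3) $\Rightarrow$ (1). The first two implications are essentially free: if every submodule of $M$ is finitely generated, then for any multiplicative set $S$ every submodule is $S$-finite (take $s = 1$), so (1) $\Rightarrow$ (2); and (2) $\Rightarrow$ (3) is immediate since maximal ideals are prime. So all the real work is in (3) $\Rightarrow$ (1).

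For (3) $\Rightarrow$ (1), I would take an arbitrary $R$-submodule $L$ of $M$ and aim to show $L$ is finitely generated. For each $\mathfrak{m} \in \mathrm{Max}(R)$, the $\mathfrak{m}$-Noetherian hypothesis (applied to $L$) gives an element $s_\mathfrak{m} \in R \setminus \mathfrak{m}$ and a finitely generated submodule $F_\mathfrak{m} \subseteq L$ with $L s_\mathfrak{m} \subseteq F_\mathfrak{m}$. The key observation is that the ideal $J := \sum_{\mathfrak{m} \in \mathrm{Max}(R)} R s_\mathfrak{m}$ cannot lie in any maximal ideal $\mathfrak{m}_0$ because it contains $s_{\mathfrak{m}_0} \notin \mathfrak{m}_0$; hence $J = R$.

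Writing $1 = r_1 s_{\mathfrak{m}_1} + \cdots + r_k s_{\mathfrak{m}_k}$ for suitable $r_i \in R$ and maximal ideals $\mathfrak{m}_i$, every $x \in L$ satisfies
\[
x \;=\; \sum_{i=1}^{k} r_i (x s_{\mathfrak{m}_i}) \;\in\; \sum_{i=1}^{k} F_{\mathfrak{m}_i},
\]
since $x s_{\mathfrak{m}_i} \in L s_{\mathfrak{m}_i} \subseteq F_{\mathfrak{m}_i}$. Therefore $L \subseteq F_{\mathfrak{m}_1} + \cdots + F_{\mathfrak{m}_k}$, which is a finitely generated submodule of $L$, and so $L$ itself is finitely generated. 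This shows $M$ is Noetherian.

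The only genuine step is the partition-of-unity argument that extracts a finite subcover from the family $\{s_\mathfrak{m}\}_{\mathfrak{m} \in \mathrm{Max}(R)}$; the torsion-free hypothesis is not actually used in this particular argument, though it is presumably carried along for consistency with the surrounding framework. No anti-Archimedean condition on $S$ is needed here because the multiplicative sets $R \setminus \mathfrak{m}$ vary with $\mathfrak{m}$, and the global-to-local principle is driven by the fact that the $s_\mathfrak{m}$ collectively generate the unit ideal.
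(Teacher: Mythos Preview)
Your argument is correct and follows essentially the same route as the paper: both dispose of (1) $\Rightarrow$ (2) $\Rightarrow$ (3) trivially, and for (3) $\Rightarrow$ (1) both use the $\mathfrak{m}$-Noetherian hypothesis to produce elements $s_{\mathfrak{m}} \notin \mathfrak{m}$ and finitely generated $F_{\mathfrak{m}}$ with $Ls_{\mathfrak{m}} \subseteq F_{\mathfrak{m}}$, observe that the $s_{\mathfrak{m}}$ generate the unit ideal, and conclude $L = L \cdot 1 \subseteq F_{\mathfrak{m}_1} + \cdots + F_{\mathfrak{m}_k}$. Your remark that the torsion-free hypothesis is not actually used in this proof is also accurate.
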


\begin{proof}
(1) $\Rightarrow$ (2) $\Rightarrow$ (3) These implications are obvious.

(3) $\Rightarrow$ (1) Suppose that $M$ is an $\mathfrak{m}$-Noetherian module
for all $\mathfrak{m} \in {\rm Max}(R)$ and
let $L$ be an $R$-submodule of $M$.
Then for each $\mathfrak{m} \in {\rm Max}(R)$,
there exist an element $s_{\mathfrak{m}} \in R \setminus \mathfrak{m}$ and
a finitely generated $R$-submodule $F_{\mathfrak{m}}$ of $L$ such that
$Ls_{\mathfrak{m}} \subseteq F_{\mathfrak{m}}$.
Let $S = \{s_{\mathfrak{m}} \,|\, \mathfrak{m} \in {\rm Max}(R)\}$.
Then $S$ is not contained in any maximal ideal of $R$, so
there exist $s_{\mathfrak{m}_1}, \dots, s_{\mathfrak{m}_n} \in S$ such that
$(s_{\mathfrak{m}_1}, \dots, s_{\mathfrak{m}_n}) = R$.
Therefore we obtain
\begin{eqnarray*}
L &=& L(s_{\mathfrak{m}_1}, \dots, s_{\mathfrak{m}_n})\\
&\subseteq & F_{\mathfrak{m}_1} + \cdots + F_{\mathfrak{m}_n}\\
&\subseteq & L.
\end{eqnarray*}
Hence $L = F_{\mathfrak{m}_1} + \cdots + F_{\mathfrak{m}_n}$.
Note that $F_{\mathfrak{m}_1} + \cdots + F_{\mathfrak{m}_n}$ is finitely generated.
Thus $M$ is a Noetherian module.
\end{proof}

Let $D$ be an integral domain,
$S$ a multiplicative subset of $D$ and
$M$ a unitary $D$-module.
We define $M$ to be {\it locally $S$-Noetherian} if
for each maximal ideal $\mathfrak{m}$ of $D$,
$M_{\mathfrak{m}}$ is an $S$-Noetherian $D_{\mathfrak{m}}$-module.
Let $L$ be a $D$-submodule of $M$.
Then it is easy to see that
$(L:M) = \{d \in D \,|\, Md \subseteq L \}$ is an ideal of $D$.
Recall that $D$ has {\it finite character} if
every nonzero nonunit in $D$ belongs to
only finitely many maximal ideals of $D$
(equivalently, each nonzero proper ideal of $D$ is contained in
only finitely many maximal ideals of $D$).
This concept can be generalized to the module version as follows:
$M$ has {\it finite character} if
for each nonzero element $a$ of $M$ with $(aD : M) \neq D$,
$(aD : M)$ is contained in only finitely many maximal ideals of $D$.
It is easy to show that
$M$ has finite character if and only if
for each nonzero proper $D$-submodule $L$ of $M$,
$(L:M)$ is contained in only finitely many maximal ideals of $D$.

\begin{proposition}\label{locally S-Noe}
Let $D$ be an integral domain,
$S$ a multiplicative subset of $D$
and $M$ a torsion-free unitary $D$-module.
Then the following assertions hold.
\begin{enumerate}
\item[(1)]
If $M$ is an $S$-Noetherian module,
then $M$ is a locally $S$-Noetherian module.
\item[(2)]
If $M$ is a locally $S$-Noetherian module
which has finite character,
then $M$ is an $S$-Noetherian module.
\end{enumerate}
\end{proposition}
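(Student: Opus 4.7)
Part (1) is an immediate consequence of Lemma~\ref{quotient 1}(2) applied with $T := D \setminus \mathfrak{m}$ for each maximal ideal $\mathfrak{m}$ of $D$: since $D$ is an integral domain, $T$ consists of regular elements, so the ``furthermore'' clause yields that $M_\mathfrak{m}$ is $S$-Noetherian as a $D_\mathfrak{m}$-module, which is exactly what ``locally $S$-Noetherian'' means.

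For part (2), I would fix a nonzero $D$-submodule $L$ of $M$ and pick a nonzero $a \in L$. The plan is to use finite character to cut the problem down to a finite list of ``bad'' maximal ideals, apply the locally $S$-Noetherian hypothesis at each of them, and then glue by a conductor argument. If $(aD:M) = D$, then $M = aD$, forcing $L = aD$, which is already finitely generated. Otherwise, finite character gives finitely many maximal ideals $\mathfrak{m}_1,\ldots,\mathfrak{m}_k$ containing $(aD:M)$, and at any other maximal ideal $\mathfrak{m}$ some $d \in (aD:M) \setminus \mathfrak{m}$ is a unit in $D_\mathfrak{m}$, which forces $M_\mathfrak{m} = aD_\mathfrak{m}$ and hence $L_\mathfrak{m} = aD_\mathfrak{m}$. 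At each $\mathfrak{m}_i$, the locally $S$-Noetherian hypothesis supplies an $s_i \in S$ and a finitely generated $F_i \subseteq L$ with $L_{\mathfrak{m}_i} s_i \subseteq (F_i)_{\mathfrak{m}_i}$.

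Setting $s := s_1 \cdots s_k \in S$ and $F := aD + F_1 + \cdots + F_k \subseteq L$, a local check shows $L_\mathfrak{m} s \subseteq F_\mathfrak{m}$ at every maximal ideal $\mathfrak{m}$ of $D$: the $aD$ summand handles the maximal ideals outside $\{\mathfrak{m}_1,\ldots,\mathfrak{m}_k\}$ (where $L_\mathfrak{m} = aD_\mathfrak{m} \subseteq F_\mathfrak{m}$), and the $F_i$ summand handles $\mathfrak{m}_i$ (since $s_i$ divides $s$). To promote these local containments to the global statement $Ls \subseteq F$, I would fix $\ell \in L$ and observe that for each maximal ideal $\mathfrak{m}$ there is a $t \in D \setminus \mathfrak{m}$ with $\ell s t \in F$, so the ideal $\{d \in D \mid d \ell s \in F\}$ is not contained in any maximal ideal of $D$, hence equals $D$; in particular $1 \cdot \ell s \in F$. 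This shows $L$ is $S$-finite, and $M$ is $S$-Noetherian.

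The main subtlety I expect is this last upgrade from the $\mathfrak{m}$-dependent multipliers to a single global inclusion $\ell s \in F$; the conductor argument above is the mechanism for doing so. The finite-character hypothesis is what makes the strategy go through, both because it confines the ``hard'' localizations to finitely many maximal ideals (otherwise the product $s_1 \cdots s_k$ would be ill-defined) and because it guarantees that, outside that finite list, the localization of $L$ collapses to a single cyclic module already absorbed by the $aD$ summand of $F$.
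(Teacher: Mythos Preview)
Your proof is correct and follows essentially the same strategy as the paper's: pick a nonzero $a$ in the submodule, use finite character to reduce to finitely many maximal ideals, apply the local $S$-Noetherian hypothesis at each, and set $F = aD + F_1 + \cdots + F_k$ and $s = s_1 \cdots s_k$. The only cosmetic difference is in the final gluing step: the paper invokes the identity $N = \bigcap_{\mathfrak{m} \in \mathrm{Max}(D)} N_{\mathfrak{m}}$ for torsion-free modules (citing \cite[Theorem~4.3]{anderson1}) to pass from $L_{\mathfrak{m}} s \subseteq F_{\mathfrak{m}}$ for all $\mathfrak{m}$ to $Ls \subseteq F$, whereas you argue directly via the conductor ideal $\{d \in D \mid d\ell s \in F\}$---these are two standard phrasings of the same local-global principle, and your version has the minor advantage of not needing the external citation (and of explicitly dispatching the edge case $(aD:M) = D$, which the paper silently skips).
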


\begin{proof}
(1) This is an immediate consequence of Lemma \ref{quotient 1}(2).

(2) Let $A$ be a $D$-submodule of $M$ and
let $a$ be a nonzero element of $A$ such that $(aD:M) \neq D$.
Since $M$ has finite character,
$(aD:M)$ is contained in only finitely many maximal ideals of $D$,
say $\mathfrak{m}_1,\dots, \mathfrak{m}_n$.
Since $M_{\mathfrak{m}_1}, \dots, M_{\mathfrak{m}_n}$ are $S$-Noetherian,
for each $i \in \{1, \dots, n\}$,
there exist an element $s_i \in S$ and
a finitely generated $D$-submodule $F_i$ of $A$ such that
$A_{\mathfrak{m}_i}s_i \subseteq ({F_i})_{\mathfrak{m}_i}$.
Let $s = s_1 \cdots s_n$ and let $F = aD + F_1 + \cdots + F_n$.
Then $A_{\mathfrak{m}_i}s \subseteq F_{\mathfrak{m}_i}$
for all $i = 1, \dots, n$.
Let $\mathfrak{m}'$ be a maximal ideal of $D$
which is distinct from $\mathfrak{m}_1, \dots, \mathfrak{m}_n$.
Then $(aD:M) \nsubseteq \mathfrak{m}'$, so
we can pick an element $r \in (aD:M) \setminus \mathfrak{m}'$.
Therefore $\frac{m}{1} = \frac{mr}{r} \in (aD)_{\mathfrak{m}'}$ for all $m \in M$.
This shows that $(aD)_{\mathfrak{m}'} = M_{\mathfrak{m}'}$, which indicates that
$A_{\mathfrak{m}'} = M_{\mathfrak{m}'} = F_{\mathfrak{m}'}$.
Hence $A_{\mathfrak{m}}s \subseteq F_{\mathfrak{m}}$
for all maximal ideals $\mathfrak{m}$ of $D$. 
Consequently, we have
\begin{eqnarray*}
As &=& \left(\bigcap_{\mathfrak{m} \in {\rm Max}(D)} A_{\mathfrak{m}}\right)s\\
&\subseteq & \bigcap_{\mathfrak{m} \in {\rm Max}(D)} A_{\mathfrak{m}}s\\
&\subseteq & \bigcap_{\mathfrak{m} \in {\rm Max}(D)} F_{\mathfrak{m}}\\
&=& F,
\end{eqnarray*}
where the equalities follow from \cite[Theorem 4.3]{anderson1}.
Since $F$ is a finitely generated $D$-submodule of $A$,
$A$ is $S$-finite.
Thus $M$ is an $S$-Noetherian $D$-module.
\end{proof}

The next example shows that
the converse of Proposition \ref{locally S-Noe}(1) does not generally hold.

\begin{example}
{\rm
Let $\mathbb{Z}_2$ be the ring of integers modulo $2$
and let $R = \prod_{i \in \mathbb{N}} \mathbb{Z}_2$.

(1) Note that $\mathbb{Z}_2 \times \{0\} \times \{0\} \times \cdots \subsetneq
\mathbb{Z}_2 \times \mathbb{Z}_2 \times \{0\} \times \{0\} \times \cdots \subsetneq \cdots$ is
a strict ascending chain of ideals of $R$, so
$R$ is not a Noetherian ring.

(2) Note that ${\rm Max}(R)=\{\prod_{i \in \mathbb{N}} A_i \,|\,$for each
$j \in \mathbb{N}$, $A_j=\{0\}$ and $A_i=\mathbb{Z}_2$ for all $i \neq j\}$,
so for all $M \in {\rm Max}(R)$,
$R_M$ has only two elements.
Hence $R$ is a locally Noetherian ring.
}
\end{example}

Let $R$ be a commutative ring with identity and
let $M$ be a unitary $R$-module.
For an element $f \in M[X]$,
the {\em content module} $c(f)$ of $f$ is defined to be
the $R$-submodule of $M$ generated by
the coefficients of $f$.
In particular, if $M=R$, then $c(f)$ is called
the {\em content ideal} of $R$.
Let $N = \{f \in R[X] \,|\, c(f) = R\}$.
Then $N$ is a (saturated) regular multiplicative subset of $R[X]$ \cite[page 17]{n} (or \cite[page 559]{a}).
The quotient module $M[X]_N$ of $M[X]$ by $N$ is usually called
the {\it Nagata module} of $M$.
Recall that a multiplicative subset $S$ of $R$ is
{\it anti-Archimedean} if
$\bigcap_{n\geq 1}s^n R \cap S \neq \emptyset$.
Now, we give the main result in this section
which involves the Hilbert basis theorem and
the Nagata module extension for $S$-Noetherian modules.

\begin{theorem} \label{main1}
Let $R$ be a commutative ring with identity,
$S$ an anti-Archimedean subset of $R$
and $M$ a unitary $R$-module.
Then the following statements are equivalent.
\begin{enumerate}
\item[(1)] $M$ is an $S$-Noetherian $R$-module.
\item[(2)] $M[X]$ is an $S$-Noetherian $R[X]$-module.
\item[(3)] $M[X]_N$ is an $S$-Noetherian $R[X]_N$-module.
\end{enumerate}
\end{theorem}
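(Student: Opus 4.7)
The plan is to establish the cyclic chain $(1) \Rightarrow (2) \Rightarrow (3) \Rightarrow (1)$. The implication $(2) \Rightarrow (3)$ is immediate from Lemma \ref{quotient 1}(2), because $N$ is a multiplicative subset of regular elements in $R[X]$. The real content sits in $(1) \Rightarrow (2)$, which is the Hilbert basis step, and in $(3) \Rightarrow (1)$, which is a Nagata-type descent recovering $S$-finiteness of submodules of $M$ from $S$-finiteness in $M[X]_N$.

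For $(1) \Rightarrow (2)$, my approach is to mimic the classical Hilbert basis argument while tracking the $S$-multipliers. Let $L$ be an $R[X]$-submodule of $M[X]$ and, for each $n \geq 0$, set
\[
L_n = \{m \in M : \text{there is } f \in L \text{ with } f = mX^n + (\text{terms of degree} < n)\}.
\]
These form an ascending chain of $R$-submodules of $M$ with union $L_\infty$. By hypothesis, there are $s^* \in S$ and $m_1, \ldots, m_k \in L_\infty$ with $L_\infty s^* \subseteq \sum m_j R$; realize each $m_j$ as the leading coefficient of some $f_j \in L$ of degree $n_j$, and set $N = \max n_j$. For each $n \in \{0, \ldots, N-1\}$, pick $t_n \in S$ and $g_{n,j} \in L$ of degree $\leq n$ whose $n$-th coefficients $S$-generate $L_n$. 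Using the anti-Archimedean hypothesis, choose $s \in S \cap \bigcap_{k \geq 1} t^k R$, where $t = s^* t_0 \cdots t_{N-1}$. I then claim that $sL \subseteq F$, where $F$ is the $R[X]$-submodule of $L$ generated by the finite list of $f_j$'s and $g_{n,j}$'s. The claim is proved by induction on $d = \deg f$: when $d \geq N$, multiplying $f$ by $s^*$ allows me to cancel its leading coefficient via an $R[X]$-combination of the $f_j$'s, producing an element of $L$ of strictly smaller degree; when $d < N$, the same trick works with $t_d$ and the $g_{d,j}$'s. Unwinding yields $t^{d+1} f \in F$, and since $s \in t^{d+1} R$, we obtain $sf \in F$, so $L$ is $S$-finite over $R[X]$.

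For $(3) \Rightarrow (1)$, given an $R$-submodule $L$ of $M$, I consider the $R[X]_N$-submodule $L[X]_N$ of $M[X]_N$. By hypothesis there exist $s \in S$ and $h_1, \ldots, h_n \in L[X]_N$ with $s L[X]_N \subseteq \sum h_i R[X]_N$. Each $h_i$ may be rewritten with numerator $f_i \in L[X]$, since its denominator lies in $N$ and so is a unit in $R[X]_N$. For $\ell \in L$, viewed as a constant polynomial, clearing denominators in the relation $s\ell/1 \in \sum f_i R[X]_N$ produces $Q \in N$ and $P_i \in R[X]$ with $Qs\ell = \sum f_i P_i$ in $M[X]$. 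Reading off coefficients, every $Q_k s\ell$ lies in the $R$-submodule $F$ of $L$ generated by the (finitely many) coefficients of the $f_i$'s. Since $c(Q) = R$, writing $1 = \sum a_k Q_k$ gives $s\ell = \sum a_k (Q_k s\ell) \in F$. Hence $Ls \subseteq F$, so $L$ is $S$-finite.

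The main obstacle is the bookkeeping in $(1) \Rightarrow (2)$: each application of the $S$-finiteness data introduces a fresh $S$-multiplier, and iterated degree reduction would accumulate an unbounded product of such factors. The anti-Archimedean hypothesis is precisely what allows me to replace this unbounded product by a single $s \in S$ divisible by every power of $t$, collapsing the induction into a uniform bound. This mirrors the role anti-Archimedeanness plays in the ring-theoretic Hilbert basis theorem of Anderson and Dumitrescu, and without it the induction would fail already at the first reduction across degree $N$.
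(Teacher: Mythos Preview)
Your proof is correct and follows essentially the same strategy as the paper's own argument: the same cyclic implication $(1)\Rightarrow(2)\Rightarrow(3)\Rightarrow(1)$, with $(2)\Rightarrow(3)$ via Lemma~\ref{quotient 1}(2), $(1)\Rightarrow(2)$ via the leading-coefficient filtration and degree-reduction with the anti-Archimedean element absorbing the accumulated powers, and $(3)\Rightarrow(1)$ by clearing denominators and using $c(Q)=R$ to recover $s\ell\in\sum c(f_i)$. The only cosmetic difference is that the paper first pushes $Bt$ into a fixed $B_d$ and then applies the $S$-generators of $B_d$ (costing two factors of $s$ per high-degree step), whereas you use the $S$-generators of $L_\infty$ directly (one factor of $s^*$ per step); either bookkeeping leads to $t^{\,q}f\in F$ for some $q$ depending on $\deg f$, which the anti-Archimedean hypothesis then uniformizes.
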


\begin{proof}
(1) $\Rightarrow$ (2)
Let $A$ be an $R[X]$-submodule of $M[X]$.
For each $k \geq 0$, let $B_k$ be the set consisting of zero and
the leading coefficients of the polynomials in $A$ of
degree less than or equal to $k$ and
let $B = \bigcup_{k \geq 0} B_k$.
Then each $B_k$ and $B$ are $R$-submodules of $M$
such that $B_k \subseteq B_{k+1}$ for all $k \geq 0$.
Since $M$ is an $S$-Noetherian $R$-module,
there exist $t \in S$ and $b_1, \dots, b_n \in B$ such that
$Bt \subseteq b_1R + \cdots + b_n R$.
Take an integer $d$ so that $b_1, \dots, b_n \in B_d$.
Then $B t \subseteq b_1R + \cdots + b_n R \subseteq B_d$.
Again, since $M$ is an $S$-Noetherian $R$-module,
for each $j \in \{0, \dots, d\}$,
there exist $s_j \in S$ and $b_{j1},\dots, b_{jk_j} \in B_j$ such that
$B_j s_j \subseteq b_{j1}R + \cdots + b_{jk_j}R$.
Let $s=s_0\cdots s_d t$.
Then we obtain
\begin{center}
$B s \subseteq b_1R + \cdots + b_nR \subseteq B_d$
\end{center}
and for all $j \in \{0, \dots, d\}$,
\begin{center}
$B_j s \subseteq b_{j1}R + \cdots + b_{jk_j}R$.
\end{center}
For each $j \in \{0,\dots, d\}$ and $\ell \in \{1, \dots, k_j\}$,
let $f_{j\ell} = b_{j\ell}X^j +$(lower terms) $\in A$.

Now, we claim that $Au \subseteq \sum_{0\leq i \leq d}\sum_{1\leq j \leq k_i} f_{ij}R[X]$
for some $u \in S$.
Let $f = aX^m +$(lower terms) $\in A$.
First, we suppose that $m \geq d+1$.
Then $a \in B$, so $as \in B_d$,
which implies that $as^2 \in b_{d1}R + \cdots + b_{dk_d}R$.
Therefore $as^2 = b_{d1}r_1 + \cdots + b_{dk_d}r_{k_d}$
for some $r_1, \dots, r_{k_d} \in R$.
Let $\alpha = fs^2 - \sum_{\ell = 1}^{k_d}f_{d\ell}r_{\ell}X^{m-d}$.
Then $\alpha \in A$ with $\deg(\alpha) \leq m-1$.
By repeating this process,
we have $q_1 \in \mathbb{N}$ and $g_1, \dots, g_{k_d} \in R[X]$ such that
$\beta := fs^{q_1} - \sum_{\ell =1}^{k_d} f_{d\ell}g_{\ell} \in A$ and
$\deg(\beta) \leq d$.
Since the leading coefficient of $\beta$ belongs to $B_{\deg(\beta)}$,
there exist $r'_1,\dots, r'_{k_{\deg(\beta)}} \in R$ such that
$\gamma := \beta s - \sum_{\ell = 1}^{k_{\deg(\beta)}} f_{\deg(\beta)\ell}r'_{\ell} \in A$ and
$\deg(\gamma) \leq \deg(\beta)-1$.
If we still have $\gamma \neq 0$, then we repeat the same process.
After finitely many steps,
we obtain
\begin{center}
$fs^{q_2} \in \sum_{0\leq i \leq d}\sum_{1\leq j \leq k_i} f_{ij}R[X]$
\end{center}
for some $q_2 \in \mathbb{N}$.
Second, we suppose that $m\leq d$.
Then a similar argument as in the previous case shows that
\begin{center}
$f s^{q_3} \in 
\sum_{0\leq i \leq d}\sum_{1\leq j \leq k_i} f_{ij}R[X]$
\end{center}
for some $q_3 \in \mathbb{N}$.
Since $S$ is an anti-Archimedean subset of $R$,
there exists an element $u \in \bigcap_{n\geq 1}s^n R \cap S$, so we have
\begin{center}
$fu \in \sum_{0\leq i \leq d}\sum_{1\leq j \leq k_i} f_{ij}R[X]$.
\end{center}
Since $f$ was arbitrarily chosen in $A$, we obtain
\begin{center}
 $A u \subseteq \sum_{0\leq i \leq d}\sum_{1\leq j \leq k_i} f_{ij}R[X]$.
\end{center}
Hence $A$ is an $S$-finite $R[X]$-submodule of $M[X]$.
Thus $M[X]$ is an $S$-Noetherian $R[X]$-module.

(2) $\Rightarrow$ (3)
This implication follows directly from Lemma \ref{quotient 1}(2).

(3) $\Rightarrow$ (1)
Let $A$ be an $R$-submodule of $M$.
Then $A[X]_N$ is an $R[X]_N$-submodule of $M[X]_N$.
Since $M[X]_N$ is an $S$-Noetherian $R[X]_N$-module,
there exist $s \in S$, $f_1, \dots, f_n \in A[X]$
and $g_1, \dots, g_n \in N$ such that
\begin{center}
$A[X]_N s \subseteq  \frac{f_1}{g_1} R[X]_N  + \cdots + \frac{f_n}{g_n} R[X]_N $.
\end{center}
Let $a \in A$.
Then we can find $h_1, \dots, h_n \in R[X]$
and $\alpha_1, \dots, \alpha_n \in N$ such that
$as =
\frac{f_1}{g_1} \frac{h_1}{\alpha_1}+\cdots +\frac{f_n}{g_n} \frac{h_n}{\alpha_n}$.
Let $\alpha = \prod_{i = 1}^{n} g_i\alpha_i$ and
for each $i=1, \dots, n$, let $\beta_i = \frac{\alpha h_i}{g_i \alpha_i}$.
Then $as =
\frac{f_1\beta_1+ \cdots + f_n\beta_n}{\alpha}$, so
we have
\begin{eqnarray*}
as\alpha &=&f_1\beta_1 + \cdots + f_n\beta_n\\
&\in& (c(f_1) + \cdots + c(f_n))[X].
\end{eqnarray*}
Since $\alpha \in N$, $as \in c(f_1) + \cdots + c(f_n)$.
Therefore $As \subseteq c(f_1) + \cdots + c(f_n)$.
Note that $c(f_1) + \cdots + c(f_n)$ is a finitely generated $R$-submodule of $A$.
Hence $A$ is an $S$-finite $R$-submodule of $M$.
Thus $M$ is an $S$-Noetherian $R$-module.
\end{proof}

\section{$S$-strong Mori modules}\label{sec3}

We start this section with some observations for $S$-$w$-finite modules.

\begin{remark}\label{N_w -> N}
{\rm
Let $D$ be an integral domain,
$S$ a multiplicative subset of $D$ and
$M$ a torsion-free $w$-module as a $D$-module.

(1) Let $L$ be a nonzero $D$-submodule of $M$.
Then $L_w$ is a $w$-submodule of $M$.
If $L_w$ is $S$-$w$-finite,
then there exist an element $s \in S$ and
a $w$-finite type submodule $F$ of $L_w$
such that $L_w s \subseteq F$, so $Ls \subseteq F$.
Conversely, if there exist an element $s \in S$
and a $w$-finite type submodule $F$ of $L_w$
such that $Ls \subseteq F$,
then $L_w s \subseteq F$.
Hence we may extend the concept of $S$-$w$-finite modules to
any nonzero submodule of a $w$-module as follows:
A nonzero submodule $L$ of $M$ is {\em $S$-$w$-finite} if
there exist an element $s \in S$ and
a $w$-finite type submodule $F$ of $L_w$
such that $Ls \subseteq F$.

(2) By (1), $M$ is an $S$-SM-module
if and only if every nonzero submodule of $M$ is $S$-$w$-finite.

(3) Suppose that $L$ is an $S$-$w$-finite submodule of $M$.
Then we can find $s \in S$ and $a_1, \dots, a_n \in L_w$ such that
$Ls \subseteq (a_1 D + \cdots + a_n D)_w$, so
for each $i=1, \dots, n$,
there exists an element $J_i \in {\rm GV}(D)$ such that
$a_iJ_i \subseteq L$.
Let $J=J_1 \cdots J_n$.
Then $J \in {\rm GV}(D)$ \cite[Lemma 1.1]{wang1} (or \cite[Lemma 2.3(3)]{hl})
and $a_iJ \subseteq L$ for all $i \in \{1, \dots, n\}$,
so we obtain
\begin{eqnarray*}
(a_1D + \cdots + a_nD)_w &=& ((a_1D + \cdots + a_nD)J)_w\\
&=& (a_1J+\cdots+a_nJ)_w,
\end{eqnarray*}
where the first equality follows from \cite[Proposition 2.7]{wang1}.
Hence $Ls \subseteq (a_1J + \cdots + a_nJ)_w$.
Note that for all $i \in \{1, \dots, n\}$,
$a_iJ$ is a finitely generated submodule of $L$.
Thus we may assume that $a_1, \dots, a_n \in L$
by replacing $a_1D+\cdots+a_nD$ by $a_1J+\cdots+a_nJ$.
}
\end{remark}

\begin{lemma}\label{basic lem for S-SM module}
Let $D$ be an integral domain and let $S$ be a multiplicative subset of $D$.
Let $M$ be a torsion-free $D$-module,
$w$ the $w$-operation on $M$ and
$\overline{w}$ the $w$-operation on $M_S$ as a $D_S$-module.
Suppose that $M$ is a $w$-module.
If $L$ is a nonzero $D$-submodule of $M$, then the following assertions hold.
\begin{enumerate}
\item[(1)] If $L$ is a $w$-submodule of $M$,
then $L_S \cap M$ is a $w$-submodule of $M$.
\item[(2)] If $L_S$ is a $\overline{w}$-submodule of $M_S$,
then $L_S \cap M$ is a $w$-submodule of $M$.
\item[(3)] $(L_w)_S \subseteq (L_S)_{\overline{w}}$ and
$((L_{w})_S)_{\overline{w}} = (L_S)_{\overline{w}}$.
\end{enumerate}
\end{lemma}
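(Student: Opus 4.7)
My plan is to prove (1) and (2) by a common device of clearing denominators against a GV witness, and then to derive (3) from these together with the monotonicity and idempotence of the $\overline{w}$-closure.

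For (1), I would fix $x \in (L_S \cap M)_w$ and use the fact that $M$ is a $w$-module (so $(L_S \cap M)_w \subseteq M_w = M$) to guarantee $x \in M$. Unpacking the $w$-closure yields a finitely generated witness $J = (j_1, \dots, j_n) \in {\rm GV}(D)$ with $xJ \subseteq L_S$. For each generator $j_i$ the relation $xj_i \in L_S$ produces some $s_i \in S$ with $xj_i s_i \in L$; because $J$ is finitely generated, the single element $s = s_1 \cdots s_n \in S$ clears all of these denominators at once and gives $(xs)J \subseteq L$. Since $L$ is a $w$-submodule of $M$ and $xs \in M$, this forces $xs \in L_w = L$, whence $x = (xs)/s \in L_S$, and combined with $x \in M$ we obtain $x \in L_S \cap M$.

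For (2), I would repeat the opening of (1) to produce $x \in M$ and a witness $J \in {\rm GV}(D)$ with $xJ \subseteq L_S$, but instead of pulling back to $L$, I would push forward and use the standard localization fact $JD_S \in {\rm GV}(D_S)$. Viewing $x$ inside $M_S$ via the torsion-free embedding $M \hookrightarrow M_S$, the inclusion $x \cdot JD_S \subseteq L_S$ together with the hypothesis $L_S = (L_S)_{\overline{w}}$ gives $x \in L_S$, so $x \in L_S \cap M$.

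For (3), the containment $(L_w)_S \subseteq (L_S)_{\overline{w}}$ follows by exactly the same mechanism: if $y = x/s$ with $x \in L_w$, pick $J \in {\rm GV}(D)$ with $xJ \subseteq L$, extend it to $JD_S \in {\rm GV}(D_S)$, and check $y \cdot JD_S \subseteq L_S$. The equality $((L_w)_S)_{\overline{w}} = (L_S)_{\overline{w}}$ then falls out of a sandwich: from $L \subseteq L_w$ one has $L_S \subseteq (L_w)_S$ and hence $(L_S)_{\overline{w}} \subseteq ((L_w)_S)_{\overline{w}}$ by monotonicity, while the first part of (3) combined with the idempotence of $\overline{w}$ gives $((L_w)_S)_{\overline{w}} \subseteq ((L_S)_{\overline{w}})_{\overline{w}} = (L_S)_{\overline{w}}$.

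The one nontrivial input supporting this plan is the stability statement $J \in {\rm GV}(D) \Rightarrow JD_S \in {\rm GV}(D_S)$, which is really the engine of parts (2) and (3). Once it is accepted, either as a cited fact or by a short direct $v$-operation calculation on $D_S$, the remaining work is bookkeeping with GV-witnesses and denominator clearing, and I do not anticipate any further genuine obstacle.
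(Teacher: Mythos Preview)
Your proposal is correct and follows essentially the same approach as the paper: the paper's proofs of (1)--(3) use exactly the denominator-clearing against a finitely generated GV witness for (1), the push-forward $JD_S \in {\rm GV}(D_S)$ (cited from Kang) for (2) and the first half of (3), and the monotonicity/idempotence sandwich for the equality in (3). Your write-up is slightly more explicit about the product $s = s_1\cdots s_n$ and the reverse inclusion in (3), but there is no substantive difference.
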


\begin{proof}
(1) Let $x \in (L_S \cap M)_w$.
Then $xJ \subseteq L_S \cap M$ for some $J \in {\rm GV}(D)$.
Since $J$ is finitely generated,
$xsJ \subseteq L$ for some $s \in S$,
so $xs \in L_w = L$.
Also, $x \in M_w=M$.
Hence $x \in L_S \cap M$.
Thus $L_S \cap M$ is a $w$-submodule of $M$.

(2) Let $x \in (L_S \cap M)_{w}$.
Then $xJ \subseteq L_S \cap M$ for some $J \in {\rm GV}(D)$, so
$xJD_S \subseteq L_S$.
Note that $JD_S \in {\rm GV}(D_S)$ (cf. \cite[Lemma 3.4(1)]{kang}), so
$x \in (L_S)_{\overline{w}} = L_S$.
Also, $x \in M_w=M$.
Hence $x \in L_S \cap M$.
Thus $L_S \cap M$ is a $w$-submodule of $M$.

(3) Let $x \in (L_{w})_S$.
Then $xs \in L_{w}$ for some $s \in S$,
so there exists an element $J \in {\rm GV}(D)$ such that $xsJ \subseteq L$.
Since $xJD_S \subseteq L_S$ and $JD_S \in {\rm GV}(D_S)$,
$x \in (L_S)_{\overline{w}}$.
Hence $(L_w)_S \subseteq (L_S)_{\overline{w}}$.
Also, by the previous inclusion,
$((L_{w})_S)_{\overline{w}} \subseteq ((L_S)_{\overline{w}})_{\overline{w}}
= (L_S)_{\overline{w}}$.
Thus $((L_{w})_S)_{\overline{w}} = (L_S)_{\overline{w}}$.
\end{proof}

Let $D$ be an integral domain and let $P$ be a prime ideal of $D$.
Then $S:=D \setminus P$ is a (saturated) multiplicative subset of $D$.
Let $M$ be a $w$-module and let $L$ be a nonzero submodule of $M$.
We say that $L$ is {\em $P$-$w$-finite} if
$L$ is $S$-$w$-finite; and
$M$ is a {\em $P$-strong Mori module} ($P$-SM-module) if
$M$ is an $S$-SM-module.

\begin{proposition}
Let $D$ be an integral domain, $\mathfrak{m}$ a maximal $w$-ideal of $D$
and $M$ a torsion-free $w$-module as a $D$-module.
Then the following assertions are equivalent.
\begin{enumerate}
\item[(1)]
$M$ is an $\mathfrak{m}$-SM-module.
\item[(2)]
$M_{\mathfrak{m}}$ is a Noetherian $D_{\mathfrak{m}}$-module and
for every nonzero finitely generated $D$-submodule $L$ of $M$,
there exists an element $s \in D \setminus \mathfrak{m}$ such that
$(L_w)_{\mathfrak{m}} \cap M = L_w : s$.
\end{enumerate}
\end{proposition}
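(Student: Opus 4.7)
The plan is to mirror the proof of the analogous earlier proposition for $\mathfrak{m}$-Noetherian modules in Section \ref{sec2}, replacing finite generation by $w$-finite generation and invoking Remark \ref{N_w -> N} and Lemma \ref{basic lem for S-SM module} at the appropriate places. The central observation I use repeatedly is that for a maximal $w$-ideal $\mathfrak{m}$ and any $D$-submodule $L$ of $M$, one has $(L_w)_{\mathfrak{m}} = L_{\mathfrak{m}}$, which follows from the inclusions $L \subseteq L_w \subseteq L_{\mathfrak{m}}$ inside $M_{\mathfrak{m}}$ (together with idempotency of localization).

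For $(1) \Rightarrow (2)$, I first show that $M_{\mathfrak{m}}$ is a Noetherian $D_{\mathfrak{m}}$-module. Given a $D_{\mathfrak{m}}$-submodule $A$ of $M_{\mathfrak{m}}$, I set $B := A \cap M$ and check that $A = B_{\mathfrak{m}}$. By Remark \ref{N_w -> N}, the submodule $B$ is $\mathfrak{m}$-$w$-finite, so there exist $s \in D \setminus \mathfrak{m}$ and $b_1, \dots, b_n \in B$ with $Bs \subseteq (b_1 D + \cdots + b_n D)_w$; localizing at $\mathfrak{m}$ and using that $s$ is a unit in $D_{\mathfrak{m}}$ yields $A = b_1 D_{\mathfrak{m}} + \cdots + b_n D_{\mathfrak{m}}$. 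For the second part of (2), let $L \subseteq M$ be nonzero and finitely generated. A direct computation (using the fact that $J \in {\rm GV}(D)$ forces $J \not\subseteq \mathfrak{m}$ since $\mathfrak{m}$ is a maximal $w$-ideal) shows that $(L_w)_{\mathfrak{m}} \cap M$ is a $w$-submodule of $M$. Applying the $\mathfrak{m}$-SM hypothesis yields $u \in D \setminus \mathfrak{m}$ and $c_1, \dots, c_k \in (L_w)_{\mathfrak{m}} \cap M$ with $((L_w)_{\mathfrak{m}} \cap M) u \subseteq (c_1 D + \cdots + c_k D)_w$. Since each $c_i \in (L_w)_{\mathfrak{m}}$, I pick $t_i \in D \setminus \mathfrak{m}$ with $c_i t_i \in L_w$ and set $s := u t_1 \cdots t_k$; then $((L_w)_{\mathfrak{m}} \cap M) s \subseteq L_w$, giving $(L_w)_{\mathfrak{m}} \cap M \subseteq L_w : s$. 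The reverse containment follows because $s$ is a unit in $D_{\mathfrak{m}}$.

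For $(2) \Rightarrow (1)$, let $L$ be a nonzero $w$-submodule of $M$. Noetherianity of $M_{\mathfrak{m}}$ gives $L_{\mathfrak{m}} = a_1 D_{\mathfrak{m}} + \cdots + a_n D_{\mathfrak{m}}$, and after clearing denominators I may take $a_i \in L$. Setting $L' := a_1 D + \cdots + a_n D$, the hypothesis produces $s \in D \setminus \mathfrak{m}$ with $(L'_w)_{\mathfrak{m}} \cap M = L'_w : s$. Combining $(L'_w)_{\mathfrak{m}} = L'_{\mathfrak{m}} = L_{\mathfrak{m}}$ with the inclusion $L \subseteq L_{\mathfrak{m}} \cap M$ yields $Ls \subseteq L'_w = (a_1 D + \cdots + a_n D)_w$, so $L$ is $\mathfrak{m}$-$w$-finite.

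The main subtle point I anticipate is confirming that multiplication by $t_1 \cdots t_k$ carries $(c_1 D + \cdots + c_k D)_w$ into $L_w$, which relies on $L_w$ being $w$-closed and already containing each $c_i t_i$, and verifying that $(L_w)_{\mathfrak{m}} \cap M$ is itself a $w$-submodule. Once these $w$-operation bookkeeping points are in hand, the argument is a direct $w$-analogue of the $\mathfrak{m}$-Noetherian case.
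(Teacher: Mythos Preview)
Your proposal is correct and follows essentially the same route as the paper: the paper likewise uses Lemma~\ref{quotient 1}(1) to write $A = B_{\mathfrak{m}}$, invokes the identity $((b_1D+\cdots+b_nD)_w)_{\mathfrak{m}} = b_1D_{\mathfrak{m}}+\cdots+b_nD_{\mathfrak{m}}$ (your ``central observation''), cites Lemma~\ref{basic lem for S-SM module}(1) for the fact that $(L_w)_{\mathfrak{m}}\cap M$ is a $w$-submodule, and then clears denominators exactly as you do to produce the required $s$. The only cosmetic differences are that the paper cites Lemma~\ref{basic lem for S-SM module}(1) rather than reproving it via $J\not\subseteq\mathfrak{m}$, and in $(2)\Rightarrow(1)$ it takes $L$ to be an arbitrary nonzero submodule rather than a $w$-submodule (which is equivalent by Remark~\ref{N_w -> N}(2)).
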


\begin{proof}
(1) $\Rightarrow$ (2)
Let $A$ be a nonzero $D_{\mathfrak{m}}$-submodule of $M_{\mathfrak{m}}$.
Then by Lemma \ref{quotient 1}(1),
$A = B_{\mathfrak{m}}$
for some $D$-submodule $B$ of $M$.
Since $M$ is an $\mathfrak{m}$-SM-module,
there exist $s \in D \setminus \mathfrak{m}$ and
$b_1, \dots, b_n \in B$ such that
$Bs \subseteq (b_1D + \cdots + b_nD)_w$, so we obtain
\begin{eqnarray*}
B_{\mathfrak{m}} &=& (Bs)_{\mathfrak{m}}\\
&\subseteq & ((b_1D + \cdots + b_nD)_w)_{\mathfrak{m}}\\
&=& b_1D_{\mathfrak{m}} + \cdots + b_nD_{\mathfrak{m}}\\
&\subseteq & B_{\mathfrak{m}},
\end{eqnarray*}
where the second equality comes from
\cite[Remark before Proposition 4.6]{wang1}
(or \cite[Theorem 4.3]{anderson1})
since $t$-${\rm Max}(D) = w$-${\rm Max}(D)$ \cite[Theorem 2.16]{anderson1}.
Hence $B_{\mathfrak{m}} = b_1D_{\mathfrak{m}} + \cdots + b_nD_{\mathfrak{m}}$.
Therefore $M_{\mathfrak{m}}$ is a Noetherian $D_{\mathfrak{m}}$-module.
For the remaining argument,
let $L$ be a nonzero finitely generated $D$-submodule of $M$.
Then $(L_w)_{\mathfrak{m}} \cap M$ is a $w$-submodule of $M$
by Lemma \ref{basic lem for S-SM module}(1), so
there exist $t \in D \setminus \mathfrak{m}$ and
$c_1, \dots, c_m \in (L_w)_{\mathfrak{m}} \cap M$ such that
\begin{center}
$((L_w)_{\mathfrak{m}} \cap M)t \subseteq  (c_1D + \cdots + c_m D)_w$
\end{center}
and $c_1t_1, \dots, c_mt_m \in L_w$
for some $t_1, \dots, t_m \in D\setminus \mathfrak{m}$.
Let $t' = t_1\cdots t_m$.
Then $(c_1D + \cdots + c_mD)_w t'\subseteq L_w$.
Therefore
\begin{center}
$((L_w)_{\mathfrak{m}} \cap M)tt'
\subseteq (c_1D + \cdots + c_mD)_wt'
\subseteq L_w$.
\end{center}
This fact implies that
$(L_w)_{\mathfrak{m}} \cap M = L_w : s$,
where $s = tt'$.

(2) $\Rightarrow$ (1)
Let $L$ be a nonzero $D$-submodule of $M$.
Then $L_{\mathfrak{m}}$ is a $D_{\mathfrak{m}}$-submodule of $M_{\mathfrak{m}}$, so
$L_{\mathfrak{m}} = a_1D_{\mathfrak{m}} + \cdots + a_nD_{\mathfrak{m}}$
for some $a_1, \dots, a_n \in L$.
Hence
\begin{eqnarray*}
L &\subseteq & L_{\mathfrak{m}}\cap M \\
&=& (a_1D_{\mathfrak{m}} + \cdots + a_nD_{\mathfrak{m}}) \cap M\\
&=& ((a_1D + \cdots + a_nD)_w)_{\mathfrak{m}} \cap M\\
&=& (a_1D + \cdots + a_nD)_w : s
\end{eqnarray*}
for some $s \in D\setminus \mathfrak{m}$,
where the third equality comes from \cite[Remark before Proposition 4.6]{wang1},
which means that $Ls \subseteq (a_1D + \cdots + a_nD)_w$.
Thus $L$ is $\mathfrak{m}$-$w$-finite.
Consequently, $M$ is an $\mathfrak{m}$-SM-module.
\end{proof}

\begin{proposition}
Let $D$ be an integral domain and 
let $M$ be a torsion-free $w$-module as a $D$-module.
Then the following conditions are equivalent.
\begin{enumerate}
\item[(1)] $M$ is an SM-module.
\item[(2)] $M$ is a $P$-SM-module for all $P \in w$-${\rm Spec}(D)$.
\item[(3)] $M$ is an $\mathfrak{m}$-SM-module for all $\mathfrak{m} \in w$-${\rm Max}(D)$.
\end{enumerate}
\end{proposition}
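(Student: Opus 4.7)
The plan is to handle the two easy implications first and then devote the main effort to (3) $\Rightarrow$ (1), which is the content-bearing direction.

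For (1) $\Rightarrow$ (2), note that if $M$ is an SM-module, then for any nonzero submodule $L$ of $M$, $L_w$ is $w$-finite, say $L_w = (a_1D+\cdots+a_nD)_w$, and so $L\cdot 1 \subseteq L_w = (a_1D+\cdots+a_nD)_w$ exhibits $L$ as $S$-$w$-finite for any multiplicative subset $S$ of $D$ (choosing $s=1 \in S$). Thus $M$ is $P$-SM for every $P \in w$-$\mathrm{Spec}(D)$. For (2) $\Rightarrow$ (3), every maximal $w$-ideal of $D$ is a prime $w$-ideal (a standard consequence of $t$-$\mathrm{Max}(D) = w$-$\mathrm{Max}(D)$ and the primeness of maximal $t$-ideals cited in \cite{anderson1}), so (3) is a special case of (2).

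The substantive direction is (3) $\Rightarrow$ (1). Let $L$ be a nonzero $w$-submodule of $M$. For each $\mathfrak{m} \in w$-$\mathrm{Max}(D)$, hypothesis (3) yields an $s_{\mathfrak{m}} \in D \setminus \mathfrak{m}$ and a finitely generated submodule $F_{\mathfrak{m}} \subseteq L$ with
\[
L\, s_{\mathfrak{m}} \subseteq (F_{\mathfrak{m}})_w \subseteq L_w = L.
\]
Consider the ideal $I$ of $D$ generated by $\{s_{\mathfrak{m}} \mid \mathfrak{m} \in w\text{-}\mathrm{Max}(D)\}$. By construction $I \not\subseteq \mathfrak{m}$ for any $\mathfrak{m} \in w$-$\mathrm{Max}(D)$, and so $I_w = D$. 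By the definition of the $w$-operation, this means $J \subseteq I$ for some $J \in \mathrm{GV}(D)$, and since $J$ is finitely generated there exist finitely many $\mathfrak{m}_1,\dots,\mathfrak{m}_n \in w$-$\mathrm{Max}(D)$ such that $J \subseteq (s_{\mathfrak{m}_1},\dots,s_{\mathfrak{m}_n})$.

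Set $F := F_{\mathfrak{m}_1}+\cdots+F_{\mathfrak{m}_n}$, a finitely generated submodule of $L$. For each $i$ we have $L\, s_{\mathfrak{m}_i} \subseteq (F_{\mathfrak{m}_i})_w \subseteq F_w$, so
\[
L\cdot J \subseteq L\cdot (s_{\mathfrak{m}_1},\dots,s_{\mathfrak{m}_n}) \subseteq F_w.
\]
Because $F_w$ is a $w$-module and $J \in \mathrm{GV}(D)$, every $x \in L$ satisfies $xJ \subseteq F_w$ and hence $x \in (F_w)_w = F_w$. Thus $L \subseteq F_w$, and combined with $F \subseteq L = L_w$ we conclude $L = F_w$, showing $L$ is $w$-finite. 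Therefore $M$ is an SM-module.

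The main obstacle is the step extracting a finite subset of the $s_{\mathfrak{m}}$'s that suffices; this requires the fact that an ideal not contained in any maximal $w$-ideal must contain a GV-ideal, which converts the ``local'' bounds into a single ``global'' bound through a GV-ideal. Everything else is a routine adaptation of the $w$-analogue of the Noetherian proof given earlier in the paper.
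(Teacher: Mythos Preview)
Your proof is correct and follows essentially the same approach as the paper. Both arguments, for the key direction (3) $\Rightarrow$ (1), pick $s_{\mathfrak{m}}$ and $F_{\mathfrak{m}}$ for each $\mathfrak{m}\in w$-$\mathrm{Max}(D)$, observe that the ideal generated by the $s_{\mathfrak{m}}$'s has $w$-closure $D$, extract finitely many $s_{\mathfrak{m}_1},\dots,s_{\mathfrak{m}_n}$ with $(s_{\mathfrak{m}_1},\dots,s_{\mathfrak{m}_n})_w=D$, and conclude $L=(F_{\mathfrak{m}_1}+\cdots+F_{\mathfrak{m}_n})_w$; your version phrases the extraction step via an explicit GV-ideal $J\subseteq I$, while the paper states directly that $(s_{\mathfrak{m}_1},\dots,s_{\mathfrak{m}_n})_w=D$, but these are equivalent formulations of the same idea.
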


\begin{proof}
(1) $\Rightarrow$ (2) $\Rightarrow$ (3) These implications are obvious.

(3) $\Rightarrow$ (1) Suppose that $M$ is an $\mathfrak{m}$-SM-module
for all $\mathfrak{m} \in w$-${\rm Max}(D)$ and
let $L$ be a $w$-submodule of $M$.
Then for each $\mathfrak{m} \in w$-Max$(D)$,
there exist an element $s_{\mathfrak{m}} \in D \setminus \mathfrak{m}$ and
a finitely generated $D$-submodule $F_{\mathfrak{m}}$ of $L$ such that
$Ls_{\mathfrak{m}} \subseteq (F_{\mathfrak{m}})_w$.
Let $S = \{s_{\mathfrak{m}} \,|\, \mathfrak{m} \in w$-${\rm Max}(D)\}$.
Then $S$ is not contained in any maximal $w$-ideal of $D$, so
there exist $s_{\mathfrak{m}_1}, \dots, s_{\mathfrak{m}_n} \in S$ such that
$(s_{\mathfrak{m}_1}, \dots, s_{\mathfrak{m}_n})_w = D$.
Hence we obtain
\begin{eqnarray*}
L &=& (L(s_{\mathfrak{m}_1}, \dots, s_{\mathfrak{m}_n})_w)_w\\
&=& (L(s_{\mathfrak{m}_1}, \dots, s_{\mathfrak{m}_n}))_w\\
&\subseteq & (F_{\mathfrak{m}_1} + \cdots + F_{\mathfrak{m}_n})_w\\
&\subseteq & L.
\end{eqnarray*}
Thus $L = (F_{\mathfrak{m}_1} + \cdots + F_{\mathfrak{m}_n})_w$.
Consequently, $M$ is an SM-module.
\end{proof}

Recall that an integral domain $D$
has {\it finite $w$-character} if
for each nonzero nonunit in $D$ belongs to
only finitely many maximal $w$-ideals of $D$,
or equivalently, for each nonzero proper ideal of $D$ is contained in
only finitely many maximal $w$-ideals of $D$.
Generalizing this, a finite $w$-character can be defined in the module as follows:
A $D$-module $M$ has {\it finite $w$-character} if
for each nonzero element $a$ of $M$ with $(aD : M) \neq D$,
$(aD : M)$ is contained in only finitely many maximal $w$-ideals of $D$.
It is easy to show that
$M$ has finite $w$-character if and only if
for each nonzero proper $D$-submodule $L$ of $M$,
$(L:M)$ is contained in only finitely many maximal $w$-ideals of $D$.
Also, it is easy to show that
every commutative ring with identity which has finite $w$-character has
finite $w$-character as module.
Recall that a $D$-module $M$ is a {\it $w$-locally $S$-Noetherian $D$-module} if
for each maximal $w$-ideal $\mathfrak{m}$,
$M_{\mathfrak{m}}$ is an $S$-Noetherian $D_{\mathfrak{m}}$-module.

\begin{proposition}\label{w-locally Noe}
Let $D$ be an integral domain, $S$ a multiplicative subset of $D$
and $M$ a torsion-free $w$-module as a $D$-module.
Then the following assertions hold.
\begin{enumerate}
\item[(1)]
If $M$ is an $S$-SM-module,
then $M$ is a $w$-locally $S$-Noetherian module.
\item[(2)]
If $M$ is a $w$-locally $S$-Noetherian module which
has finite $w$-character, then
$M$ is an $S$-SM-module.
\end{enumerate}
\end{proposition}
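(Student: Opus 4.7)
The plan is to mirror Proposition \ref{locally S-Noe} (the non-$w$ version), replacing ordinary localizations at maximal ideals by localizations at maximal $w$-ideals, and replacing the identity $L = \bigcap_{\mathfrak{m}} L_{\mathfrak{m}}$ by its $w$-analogue $N_w = \bigcap_{\mathfrak{m} \in t\mhyphen{\rm Max}(D)} N_{\mathfrak{m}}$ from \cite[Theorem 4.3]{anderson1}. I will also use the fact that no GV-ideal is contained in a maximal $w$-ideal, which implies $(N_w)_{\mathfrak{m}} = N_{\mathfrak{m}}$ whenever $\mathfrak{m}$ is a maximal $w$-ideal and $N$ is a $D$-submodule of $M$.

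For part (1), fix a maximal $w$-ideal $\mathfrak{m}$ and an arbitrary $D_{\mathfrak{m}}$-submodule $A$ of $M_{\mathfrak{m}}$. By Lemma \ref{quotient 1}(1), write $A = B_{\mathfrak{m}}$ for some $D$-submodule $B$ of $M$. Then $B_w$ is a $w$-submodule of $M$, so the $S$-SM hypothesis together with Remark \ref{N_w -> N}(3) produces $s \in S$ and $b_1, \dots, b_n \in B$ with $B_w s \subseteq (b_1 D + \cdots + b_n D)_w$. Localizing at $\mathfrak{m}$ collapses the right-hand $w$-closure to $b_1 D_{\mathfrak{m}} + \cdots + b_n D_{\mathfrak{m}}$, and from $B \subseteq B_w$ one obtains $A s = B_{\mathfrak{m}} s \subseteq b_1 D_{\mathfrak{m}} + \cdots + b_n D_{\mathfrak{m}} \subseteq A$, so $A$ is $S$-finite.

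For part (2), by Remark \ref{N_w -> N}(2) it suffices to show that each nonzero $D$-submodule $A$ of $M$ is $S$-$w$-finite. Choose a nonzero $a \in A$ with $(aD:M) \neq D$; if no such $a$ exists, then $A$ is cyclic and trivially $S$-$w$-finite. Finite $w$-character now supplies finitely many maximal $w$-ideals $\mathfrak{m}_1, \dots, \mathfrak{m}_n$ containing $(aD:M)$, and the $w$-local $S$-Noetherian hypothesis gives, for each $i$, an element $s_i \in S$ and a finitely generated $F_i \subseteq A$ with $A_{\mathfrak{m}_i} s_i \subseteq (F_i)_{\mathfrak{m}_i}$. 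Set $s := s_1 \cdots s_n$ and $F := aD + F_1 + \cdots + F_n$; then $F \subseteq A$ is finitely generated and $A_{\mathfrak{m}_i} s \subseteq F_{\mathfrak{m}_i}$ at every listed $\mathfrak{m}_i$. At any other maximal $w$-ideal $\mathfrak{m}'$, an element $r \in (aD:M) \setminus \mathfrak{m}'$ is a unit in $D_{\mathfrak{m}'}$ while $Mr \subseteq aD$, forcing $A_{\mathfrak{m}'} = M_{\mathfrak{m}'} = F_{\mathfrak{m}'}$. Intersecting over all maximal $w$-ideals then gives $As \subseteq \bigcap_{\mathfrak{m}} F_{\mathfrak{m}} = F_w$, which is the required $S$-$w$-finiteness of $A$.

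The main obstacle is the final intersection step in (2): one needs $F_w = \bigcap_{\mathfrak{m}} F_{\mathfrak{m}}$ together with the observation that finite $w$-character trims the otherwise infinitely many local containments to finitely many nontrivial ones, the remaining ones being absorbed by the trivialization $A_{\mathfrak{m}'} = M_{\mathfrak{m}'}$. Once this bookkeeping is aligned with maximal $w$-ideals rather than maximal ideals, the argument is the $w$-analogue of the proof of Proposition \ref{locally S-Noe}(2), and part (1) is a direct $w$-translation via Lemma \ref{quotient 1}(1).
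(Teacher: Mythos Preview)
Your proof is correct and follows essentially the same argument as the paper: part (1) uses Lemma \ref{quotient 1}(1) and the collapse $(N_w)_{\mathfrak m}=N_{\mathfrak m}$ at maximal $w$-ideals, and part (2) is the $w$-analogue of Proposition \ref{locally S-Noe}(2) with $F_w=\bigcap_{\mathfrak m\in w\mhyphen\mathrm{Max}(D)}F_{\mathfrak m}$ replacing the ordinary intersection. You are in fact slightly more careful than the paper in explicitly disposing of the degenerate case where no nonzero $a\in A$ satisfies $(aD:M)\neq D$.
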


\begin{proof}
(1) Let $\mathfrak{m}$ be a maximal $w$-ideal of $D$ and
let $A$ be a $D_{\mathfrak{m}}$-submodule of $M_{\mathfrak{m}}$.
Then by Lemma \ref{quotient 1}(1), $A = B_{\mathfrak{m}}$
for some $D$-submodule $B$ of $M$, so
there exist $s \in S$ and $b_1, \dots, b_n \in B$ such that 
$A's \subseteq (b_1 D + \cdots + b_nD)_w$.
Therefore we obtain
\begin{center}
$As = B_{\mathfrak{m}}s \subseteq ((b_1 D + \cdots + b_nD)_w)_{\mathfrak{m}}
= b_1D_{\mathfrak{m}} + \cdots + b_nD_{\mathfrak{m}}$,
\end{center}
where the last equality comes from \cite[Remark before Proposition 4.6]{wang1}.
Hence $A$ is $S$-finite.
Thus $M_{\mathfrak{m}}$ is an $S$-Noetherian $D_{\mathfrak{m}}$-module
for each $\mathfrak{m} \in w$-Max$(D)$.
Consequently, $M$ is a $w$-locally $S$-Noetherian module.

(2) Suppose that $M$ is a $w$-locally $S$-Noetherian module
which has finite $w$-character and
let $A$ be a $D$-submodule of $M$.
Let $a$ be a nonzero element of $A$ such that
$(aD : M) \neq D$.
Then $(aD:M)$ is contained in only finitely many maximal $w$-ideals of $D$,
say $\mathfrak{m}_1, \dots, \mathfrak{m}_m$.
Since for each $i = 1, \dots, m$, $M_{\mathfrak{m}_i}$ is $S$-Noetherian,
we obtain that there exist $s_i \in S$ and
a finitely generated $D$-submodule $F_i$ of $A$ such that
$A_{\mathfrak{m}_i}s_i \subseteq (F_i)_{\mathfrak{m}_i}$.
Let $s = s_1\cdots s_m$ and $F = aD + F_1 + \cdots + F_m$.
Then $A_{\mathfrak{m}_i}s \subseteq F_{\mathfrak{m}_i}$ for all $i = 1, \dots, m$.
Let $\mathfrak{m}' \neq \mathfrak{m}_i$ for all $i = 1, \dots, m$.
Then $(aD:M) \nsubseteq \mathfrak{m}'$.
Hence there exists $x \in (aD:M)$ such that $x \notin \mathfrak{m}'$; that is,
for all $m \in M$, $mx \in aD$, but $x \notin \mathfrak{m}'$.
Hence $\frac{m}{1} = \frac{mx}{x} \in (aD)_{\mathfrak{m}'}$.
Therefore $(aD)_{\mathfrak{m}'} = M_{\mathfrak{m}'}$; that is,
$F_{\mathfrak{m}'} = M_{\mathfrak{m}'}$.
This fact implies that $A_{\mathfrak{m}}s \subseteq F_{\mathfrak{m}}$
for each $\mathfrak{m} \in w$-Max$(D)$,
so we have
\begin{eqnarray*}
A_ws &=& \Big(\bigcap_{\mathfrak{m} \in w\mhyphen{\rm Max}(D)} A_{\mathfrak{m}}\Big)s\\
&\subseteq & \bigcap_{\mathfrak{m} \in w\mhyphen{\rm Max}(D)} A_{\mathfrak{m}}s\\
&\subseteq & \bigcap_{\mathfrak{m} \in w\mhyphen{\rm Max}(D)} F_{\mathfrak{m}}\\
&=& F_w,
\end{eqnarray*}
where the equalities follow from \cite[Theorem 7.3.6]{wang}.
Since $F$ is finitely generated and $F \subseteq A$,
$A$ is $S$-$w$-finite type.
Thus $M$ is an $S$-SM-module.
\end{proof}

Let $D$ be an integral domain and let $M$ be a $w$-module as a $D$-module.
We say that $M$ is a {\it DW-module}
if every nonzero $D$-submodule of $M$ is a $w$-module.
Let $N_v = \{ f \in D[X] \,|\, c(f)_v = D \}$.
Then $N_v$ is a (saturated) multiplicative subset of $D[X]$ \cite[Proposition 2.1]{kang};
and the quotient module $M[X]_{N_v}$ of $M[X]$ by $N_v$ is called the {\it $t$-Nagata module} of $M$.

\begin{lemma}\label{DW-module}
Let $D$ be an integral domain
and let $M$ be a nonzero $D$-module.
Then $M[X]_{N_v}$ is a DW-module.
\end{lemma}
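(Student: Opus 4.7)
The plan is to prove the stronger fact that the only Glaz--Vasconcelos ideal of $D[X]_{N_v}$ is $D[X]_{N_v}$ itself, i.e., ${\rm GV}(D[X]_{N_v}) = \{D[X]_{N_v}\}$. Granting this, if $L$ is any nonzero $D[X]_{N_v}$-submodule of $M[X]_{N_v}$ and $x$ lies in the $w$-envelope of $L$, then $xJ \subseteq L$ for some $J \in {\rm GV}(D[X]_{N_v})$; the forced equality $J = D[X]_{N_v}$ gives $x \in L$, so $L$ is a $w$-module and $M[X]_{N_v}$ is a DW-module.

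To establish that every GV-ideal equals the whole ring, I would take $J \in {\rm GV}(D[X]_{N_v})$ and clear denominators from a finite set of generators to write $J = I \cdot D[X]_{N_v}$ for some finitely generated ideal $I = (f_1, \dots, f_n)$ of $D[X]$. It then suffices to exhibit an $f \in I$ with $c(f)_v = D$, since such an $f$ lies in $I \cap N_v$ and is therefore a unit in $D[X]_{N_v}$, forcing $J = D[X]_{N_v}$.

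The crux is to show $c(I)_v = D$, where $c(I) = c(f_1) + \cdots + c(f_n)$. For any $\alpha \in c(I)^{-1}$ (viewed inside the quotient field $K$ of $D$) one has $\alpha f_i \in D[X]$ for all $i$, so $\alpha J \subseteq D[X]_{N_v}$; the hypothesis $J^{-1} = D[X]_{N_v}$ then yields $\alpha \in D[X]_{N_v}$. Writing $\alpha = a/b = g/h$ with $a,b \in D$, $g \in D[X]$, $h \in N_v$, one compares the identity $ah = bg$ after taking contents and $v$-closures; using $c(h)_v = D$ together with the identity $(aI)_v = a I_v$ for multiplication by a principal ideal, one obtains $(a) = (b)\,c(g)_v$, which forces $\alpha = a/b \in c(g)_v \subseteq D$. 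Hence $c(I)^{-1} = D$, i.e., $c(I)_v = D$.

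The concluding step is the standard shifting trick to produce $f \in I$ with $c(f) = c(I)$: picking an integer $d > \max_i \deg f_i$ and setting $f := \sum_{i=1}^{n} f_i X^{(i-1)d}$ gives a polynomial in $I$ whose coefficient set is the disjoint union of the coefficient sets of the $f_i$, so $c(f) = c(f_1)+\cdots+c(f_n) = c(I)$ and $c(f)_v = D$. I expect the main obstacle to be the content-ideal manipulation of the third paragraph, namely verifying the intersection identity $K \cap D[X]_{N_v} = D$ via the $v$-closure of contents of $N_v$-elements; this is where the $v$-arithmetic has to be handled carefully, whereas the surrounding reductions and the shifting step are essentially formal.
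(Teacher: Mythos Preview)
Your proof is correct and follows the same overall reduction as the paper: both argue that ${\rm GV}(D[X]_{N_v}) = \{D[X]_{N_v}\}$, from which the DW-module property is immediate by the definition of the $w$-envelope. The difference is purely one of self-containment: the paper dispatches the key fact by citing \cite[Theorems 6.3.12 and 6.6.18]{wang} (essentially that $D[X]_{N_v}$ is a DW-domain), whereas you supply a direct argument via the content ideal, showing $c(I)_v = D$ through the intersection $K \cap D[X]_{N_v} = D$ and then producing an element of $I \cap N_v$ by the shifting trick. Your route is more elementary and avoids the external reference, at the cost of length; the paper's route is a two-line citation. The content-ideal manipulation you flagged as the delicate step is handled correctly: from $ah = bg$ one gets $a\,c(h)_v = b\,c(g)_v$, hence $(a/b)D = c(g)_v \subseteq D$, so $\alpha \in D$ as needed.
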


\begin{proof}
Suppose that $A$ is a $D[X]_{N_v}$-submodule of $M[X]_{N_v}$.
Let $f \in A_w$.
Then $fJ \in A$ for some $J \in {\rm GV}(D[X]_{N_v})$.
Note that ${\rm GV}(D[X]_{N_v}) = \{D[X]_{N_v}\}$ \cite[Theorems 6.3.12 and 6.6.18]{wang},
so $J = D[X]_{N_v}$.
Hence $f \in A$, which indicates that $A_w = A$.
Thus $M[X]_{N_v}$ is a DW-module.
\end{proof}

\begin{lemma}{\rm (cf. \cite[Proposition 6.6.13]{wang})} \label{3.7}
Let $D$ be an integral domain and
let $M$ be a torsion-free $D$-module.
Denote that $M[X]_W$ is the $w$-envelop of a $D[X]$-module $M[X]$.
Then the following assertions hold.
\begin{enumerate}
\item[(1)]
$M_w[X] = (M[X])_W$.
\item[(2)]
If $M$ is a $w$-module, then $M[X]$ is a $w$-$D[X]$-module.
\end{enumerate}
\end{lemma}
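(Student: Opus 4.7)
The plan is to prove (1) first; statement (2) then drops out immediately, since if $M_w = M$ one has $M_w[X] = M[X]$ and (1) yields $(M[X])_W = M[X]$, which is exactly the assertion that $M[X]$ is a $w$-$D[X]$-module.

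For the inclusion $M_w[X] \subseteq (M[X])_W$, I would take $f = \sum_{i=0}^{n} a_i X^i \in M_w[X]$. By definition of $M_w$, for each $i$ there exists $J_i \in {\rm GV}(D)$ with $a_i J_i \subseteq M$. Setting $J := J_0 J_1 \cdots J_n$, \cite[Lemma 1.1]{wang1} gives $J \in {\rm GV}(D)$, and clearly $a_i J \subseteq M$ for all $i$, hence $f J \subseteq M[X]$, so $f \cdot JD[X] \subseteq M[X]$. It remains to check $JD[X] \in {\rm GV}(D[X])$: given $\varphi \in K(X)$ with $\varphi \cdot JD[X] \subseteq D[X]$, a clearing-of-denominators argument first forces $\varphi \in K[X]$, and then the coefficient-wise relation $eJ \subseteq D$ for every coefficient $e$ of $\varphi$ yields $e \in J^{-1} = D$, so $\varphi \in D[X]$. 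Thus $(JD[X])^{-1} = D[X]$, and $f \in (M[X])_W$.

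For the inclusion $(M[X])_W \subseteq M_w[X]$, I would start with $f \in (M[X])_W$ and pick a finitely generated $\mathcal{J} = (g_1, \ldots, g_k) \in {\rm GV}(D[X])$ with $f g_i \in M[X]$ for each $i$. Let $J := c(g_1) + \cdots + c(g_k)$. A direct check shows $J \in {\rm GV}(D)$: any $d \in K$ with $dJ \subseteq D$ satisfies $d g_i \in D[X]$ for all $i$, so $d \in \mathcal{J}^{-1} = D[X]$, forcing $d \in D$. Because $M$ is torsion-free, $f$ can be regarded as an element of $M_K(X) = M[X] \otimes_{D[X]} K(X)$. The argument now proceeds in two stages. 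First, I would show $f \in M_K[X]$: writing $f = P/Q$ in reduced form with $P \in M_K[X]$ and $Q \in D[X]$, the divisibility $Q \mid P g_i$ in the free $K[X]$-module $M_K[X]$ forces $Q \mid g_i$ in $K[X]$ for each $i$, so $\mathcal{J} \subseteq Q K[X]$; the ${\rm GV}$-property of $\mathcal{J}$ then forces $Q$ to be a unit in $K[X]$. Second, writing $f = \sum_{j} b_j X^j$ with $b_j \in M_K$, comparison of coefficients in each identity $f g_i \in M[X]$ gives a finite system of $D$-linear relations that implies $b_j \cdot J^N \subseteq M$ for some $N$ depending only on the $g_i$. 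Since $J^N \in {\rm GV}(D)$ by iterated application of \cite[Lemma 1.1]{wang1}, each $b_j$ lies in $M_w$, so $f \in M_w[X]$.

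The principal obstacle is the first stage of the harder inclusion: rigorously reducing an element of $(M[X])_W$, a priori only a rational function in $M_K(X)$, to a genuine polynomial in $M_K[X]$. This requires a careful Gauss-lemma / content manipulation using the ${\rm GV}$-hypothesis on $\mathcal{J}$. Once the polynomial form is secured, the coefficient-wise extraction to $M_w$ reduces to standard Dedekind--Mertens-type bookkeeping, and (2) follows as an immediate corollary of (1).
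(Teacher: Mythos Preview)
Your proof is correct and follows the same route as the paper's: the easy inclusion is identical, and for the hard inclusion both arguments ultimately rest on the Dedekind--Mertens content formula $c(f)c(g_i)^{m+1}=c(fg_i)c(g_i)^m$, which the paper invokes explicitly to obtain $c(f)\bigl(\sum_i c(g_i)^{m+1}\bigr)\subseteq M$ with the bracketed ideal in ${\rm GV}(D)$, and which is precisely what underlies your claim that $b_j J^N\subseteq M$. One minor difference worth noting: you spell out the preliminary reduction $f\in M_K[X]$ (your ``first stage''), whereas the paper tacitly assumes this by writing $c(f)$ from the outset.
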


\begin{proof}
(1) Let $f := a_0 + a_1 X + \cdots + a_n X^n \in M_w[X]$.
Then $a_i \in M_w$ for all $0 \leq i \leq n$,
so for each $0 \leq i \leq n$,
there exists an element $J_i \in {\rm GV}(D)$ such that
$a_iJ_i \subseteq M$.
Let $J = J_0 \cdots J_n$.
Then $a_i J \subseteq M$ for all $0 \leq i \leq n$.
Hence $(a_0D[X] + \cdots +a_nD[X])JD[X] \subseteq M[X]$.
Since $JD[X] \in {\rm GV}(D[X])$,
$a_0D[X] + \cdots + a_nD[X] \subseteq (M[X])_W$.
It follows that $f \in (M[X])_W$.
For the reverse containment,
let $f \in (M[X])_W$.
Then there exists $J := (f_1,\dots,f_n) \in {\rm GV}(D)$ such that
$fJ \subseteq M[X]$.
Note that for each $1 \leq i \leq n$,
there exists a positive integer $m$ such that
$c(f)c(f_i)^{m+1} = c(ff_i)c(f_i)^m$ \cite[Theorem 1.7.16]{wang}.
Hence there exists a positive integer $k$ such that
$c(f)c(f_i)^{k+1} = c(ff_i)c(f_i)^k$ for all $1 \leq i \leq n$.
Therefore
\begin{center}
$c(f)(c(f_1)^{k+1} + \cdots + c(f_n)^{k+1}) =
c(ff_1)c(f_1)^k + \cdots + c(ff_n)c(f_n)^k \subseteq M$.
\end{center}
Since $c(f_1)^{k+1} + \cdots + c(f_n)^{k+1} \in {\rm GV}(D)$,
$c(f) \subseteq M_w$.
Thus $f \in M_w[X]$.
Consequently, $M_w[X] = (M[X])_W$.

(2) This is an immediate consequence of the previous result.
\end{proof}

Now, we are ready to prove the Hilbert basis theorem and
the $t$-Nagata module extension for $S$-SM-modules.

\begin{theorem}\label{main2}
Let $D$ be an integral domain,
$S$ an anti-Archimedean subset of $D$,
$N_v=\{f \in D[X] \,|\, c(f)_v=D\}$ and
$M$ a torsion-free $w$-module as a $D$-module.
Then the following statements are equivalent.
\begin{enumerate}
\item[(1)]
$M$ is an $S$-SM-module.
\item[(2)]
$M[X]$ is an $S$-SM-module.
\item[(3)]
$M[X]_{N_v}$ is an $S$-SM-module.
\item[(4)]
$M[X]_{N_v}$ is an $S$-Noetherian module.
\end{enumerate}
\end{theorem}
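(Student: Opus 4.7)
The plan is to establish the cycle $(1)\Rightarrow(2)\Rightarrow(3)\Leftrightarrow(4)\Rightarrow(1)$. The equivalence $(3)\Leftrightarrow(4)$ is essentially free: by Lemma \ref{DW-module}, $M[X]_{N_v}$ is a DW-module, so every submodule equals its own $w$-envelope and $S$-$w$-finite collapses to $S$-finite. For $(2)\Rightarrow(3)$, given a $w$-submodule $A$ of $M[X]_{N_v}$, pull it back to a $D[X]$-submodule $B$ of $M[X]$ via Lemma \ref{quotient 1}(1), apply the $S$-$w$-finiteness of $B$ supplied by (2), and push forward using Lemma \ref{basic lem for S-SM module}(3) together with the DW property to conclude that $A$ is $S$-finite (hence $S$-$w$-finite).

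For $(4)\Rightarrow(1)$ I would imitate the content-ideal argument of Theorem \ref{main1}(3)$\Rightarrow$(1). Given a $w$-submodule $L$ of $M$, applying the $S$-Noetherian hypothesis to $L[X]_{N_v}$ yields $s\in S$ and finitely many $f_i/g_i$ with $f_i\in L[X]$, $g_i\in N_v$ that generate $L[X]_{N_v}s$. For each $a\in L$, clearing denominators by a common multiple $\alpha\in N_v$ gives $as\cdot c(\alpha)\subseteq c(f_1)+\cdots+c(f_n)$. Because $c(\alpha)_v=D$ and $c(\alpha)$ is finitely generated, $c(\alpha)\in {\rm GV}(D)$, and hence $as\in (c(f_1)+\cdots+c(f_n))_w$. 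Since each $c(f_i)\subseteq L$, the finitely generated submodule $c(f_1)+\cdots+c(f_n)$ of $L$ witnesses the $S$-$w$-finiteness of $L$.

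The main obstacle is $(1)\Rightarrow(2)$, a Hilbert-basis-style argument adapted to the $w$-setting. Note that $M[X]$ is a $w$-$D[X]$-module by Lemma \ref{3.7}(2), so it suffices to treat an arbitrary $w$-$D[X]$-submodule $A$ of $M[X]$. Form the leading-coefficient submodules $B_k\subseteq M$ and $B:=\bigcup_k B_k$, and apply the $S$-SM hypothesis on $M$ (via Remark \ref{N_w -> N}(1)) to $B$ and to each $B_j$; combining, there exist $s\in S$, a degree bound $d$, a GV ideal $J\in {\rm GV}(D)$ obtained as the product of the individual GV ideals that arise, and elements $b_{j\ell}\in B_j$ such that $BsJ\subseteq \sum_\ell b_{j\ell}D\subseteq B_d$ and $B_jsJ\subseteq \sum_\ell b_{j\ell}D$ for each $0\leq j\leq d$. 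Lifting each $b_{j\ell}$ to $f_{j\ell}=b_{j\ell}X^j+(\text{lower})\in A$ and setting $F:=\sum_{j,\ell}f_{j\ell}D[X]$, the standard degree reduction applied to $f\in A$ consumes one factor of $s$ and one factor of $J$ per step, producing $f\cdot s^{q(f)}J^{q(f)}\subseteq F$ for some $q(f)$ depending on $\deg f$. Since $J^{q(f)}\in {\rm GV}(D)$ and $J^{q(f)}D[X]\in {\rm GV}(D[X])$, while the anti-Archimedean hypothesis yields $u\in\bigcap_{n\geq 1}s^nD\cap S$, one deduces $fu\in F_w$ uniformly in $f\in A$, hence $Au\subseteq F_w$ with $F$ a finitely generated submodule of $A$, establishing $S$-$w$-finiteness. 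The delicate point is the uniform bookkeeping of GV ideals across a reduction whose length depends on $\deg f$, and it is precisely here that the anti-Archimedean condition is essential.
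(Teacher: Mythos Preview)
Your cycle is correct, and $(3)\Leftrightarrow(4)$, $(4)\Rightarrow(1)$, and $(2)\Rightarrow(3)$ all work as you outline; the paper instead proves $(2)\Rightarrow(4)$ directly by manufacturing an element of $N_v$ from the generators of a GV-ideal of $D[X]$, but your route through Lemma~\ref{basic lem for S-SM module}(3) and the DW property is equally valid.

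The gap is in $(1)\Rightarrow(2)$. You assert a single $J\in\mathrm{GV}(D)$ with $BsJ\subseteq\sum_\ell b_{j\ell}D$ and $B_jsJ\subseteq\sum_\ell b_{j\ell}D$. No such uniform $J$ exists: $S$-$w$-finiteness gives only $Bs\subseteq(\sum_\ell b_{j\ell}D)_w$, so each individual $x\in Bs$ satisfies $xJ_x\subseteq\sum_\ell b_{j\ell}D$ for some $J_x$ depending on $x$, and nothing in Remark~\ref{N_w -> N} upgrades this to a uniform containment. Consequently the GV-ideal at each reduction step depends on the polynomial being reduced, and after $q(f)$ steps one obtains $fs^{q(f)}J_f\subseteq F$ for an $f$-dependent product $J_f\in\mathrm{GV}(D)$, not a power of a fixed $J$. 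This still yields $fs^{q(f)}\in F_W$ and hence $fu\in F_W$, so the argument is repairable --- but the ``delicate point'' is not the dependence of the exponent on $\deg f$ (that is absorbed by the anti-Archimedean $u$); it is the nonexistence of the uniform $J$ you posit at the outset. The paper organizes $(1)\Rightarrow(2)$ differently: it applies the $S$-SM hypothesis only to $B$, reduces (with step-dependent GV-ideals, exactly as above) down to degree $<n$, and then treats the remainder as $A\cap L$ where $L=M\oplus MX\oplus\cdots\oplus MX^{n-1}$ is $S$-SM by \cite[Lemma~2.7(2)]{kim}; this yields a single $t\in S$ controlling the entire low-degree part and bypasses your per-degree analysis of the $B_j$.
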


\begin{proof}
(1) $\Rightarrow$ (2)
First, note that $M[X]$ is a $W$-module by Lemma \ref{3.7}(2).
Let $A$ be a $w$-submodule of $M[X]$ and
let $B$ be the set consisting of zero and
the leading coefficients of the polynomials in $A$.
Then $B$ is a $D$-submodule of $M$.
Since $M$ is an $S$-SM-module,
$B$ is $S$-$w$-finite, so
there exist $s \in S$ and $b_1, \dots, b_m \in B$ such that
$Bs \subseteq (b_1D + \cdots + b_mD)_w$.
For each $i \in \{1,\dots,m\}$,
write $f_i = b_iX^{n_i} +$ (lower terms) $\in A$.
Let $n = {\rm max}\{n_1, \dots, n_m\}$ and
let $C = f_1D[X] + \cdots + f_mD[X]$.
Let $f = aX^k +$ (lower terms) $\in A$.
Then $a \in B$,
so $as \in (b_1D + \cdots + b_mD)_w$.
Therefore there exists an element $J \in {\rm GV}(D)$ such that
$asJ \subseteq b_1D + \cdots + b_mD$.
Let $J = (d_1, \dots, d_t)$.
Then for each $j \in \{1,\dots,t\}$,
$asd_j = \sum_{i=1}^{m}b_ir_{ji}$ for some $r_{j1}, \dots, r_{jm} \in D$.
If $k \geq n$, then for each $j \in \{1,\dots,t\}$,
let $g_j = fsd_j - \sum_{i=1}^{m}  f_ir_{ji}X^{k-n_i}$.
Then for all $j \in \{1,\dots,t\}$, $g_j \in A$ with $\deg(g_j) <k$.
If we still have some $j \in \{1,\dots,t\}$ such that
$\deg(g_j)\geq n$, then we repeat the same process.
Let $b$ be the leading coefficient of $g_j$.
Then $b \in B$,
so $bsJ_1 \subseteq b_1D + \cdots + b_m D$ for some
$J_1:=(d_1',\dots, d_{t'}') \in {\rm GV}(D)$.
Hence for each $\ell \in \{ 1,\dots, t'\}$,
$bsd_{\ell}' = \sum_{i=1}^{m}b_ir'_{\ell i}$.
Let $g_j' = g_jsd_{\ell}' - \sum_{i=1}^{m}f_ir_{\ell i}'X^{\deg(g_j)-n_i}$.
Then $g'_j \in A$, $\deg(g_j') < \deg(g_j)$ and
$g_j' = (fsd_j - \sum_{i=1}^{m}  f_i r_{ji}X^{k-n_i})sd_{\ell}' -
\sum_{i=1}^{m}f_ir_{\ell i}'X^{\deg(g_j)-n_i}$.
After finitely many steps,
we get $J' \in {\rm GV}(D)$ and an integer $q \geq 1$ such that
$fs^qJ' \subseteq (A \cap L) +C$, where $L = M \oplus MX \oplus \cdots \oplus MX^{n-1}$.
Since $L$ is an $S$-SM-module \cite[Lemma 2.7(2)]{kim},
$(A\cap L)_{w}$ is $S$-$w$-finite, so
there exist $t \in S$ and $h_1, \dots , h_s \in A \cap L$ such that
$(A \cap L)_{w}t \subseteq (h_1D + \cdots + h_sD)_{w} \subseteq
(h_1D[X] + \cdots + h_sD[X])_{W}$.
Let $u \in \bigcap_{i\geq 1}s^iD \cap S$.
Then we have
\begin{eqnarray*}
futJ'D[X] &\subseteq & ((A \cap L) +C)tD[X]\\
&=& (A \cap L)tD[X] + CtD[X]\\
&\subseteq & (h_1D[X] + \cdots + h_sD[X])_{W} + C.
\end{eqnarray*}
Since $J'D[X] \in {\rm GV}(D[X])$, $fut \in ((h_1D[X] + \cdots + h_sD[X])_{W} + C)_{W}$.
Hence we have
\begin{eqnarray*}
Aut &\subseteq & ((h_1D[X] + \cdots + h_sD[X])_{W} + C)_{W}\\
&=& (h_1D[X] + \cdots + h_sD[X] + C)_{W}.
\end{eqnarray*}
Thus $A$ is $S$-$w$-finite.
Consequently, $M[X]$ is an $S$-SM-module.

(2) $\Rightarrow$ (4)
Let $A$ be a nonzero $D[X]_{N_v}$-submodule of $M[X]_{N_v}$.
Then by Lemma \ref{quotient 1}(1),
$A = A'_{N_v}$ for some nonzero $D[X]$-submodule $A'$ of $M[X]$.
Since $A'_w$ is $S$-$w$-finite,
there exist $s \in S$ and $f_1, \dots, f_n \in A'$ such that
$A'_ws \subseteq (f_1D[X] + \cdots + f_nD[X])_w$.
Let $f \in A$.
Then $fg \in A'$ for some $g \in N_v$,
so we have
\begin{center}
$fgsJ \subseteq f_1D[X] + \cdots + f_nD[X]$
\end{center}
for some $J \in {\rm GV}(D[X])$.
Write $J = (h_1, \dots, h_m)$ for some $h_1, \dots, h_m \in D[X]$
and let $h = h_1 + h_2X^{\deg(h_1) +1} + \cdots + h_mX^{\sum_{j=1}^{m}\deg(h_j) + m-1}$.
Then $c(h)_v = D$ and $fsgh \in f_1 D[X] + \cdots + f_nD[X]$.
Since $gh \in N_v$, we obtain
\begin{center}
$fs  \in (f_1D[X] + \cdots + f_nD[X])_{N_v}$.
\end{center}
Hence $As \subseteq (f_1 D[X] + \cdots + f_n D[X])_{N_v}$.
Thus $M[X]_{N_v}$ is an $S$-Noetherian $D[X]_{N_v}$-module.

(4) $\Rightarrow$ (1)
Let $A$ be a $w$-submodule of $M$.
Then $A[X]_{N_v}$ is a $D[X]_{N_v}$-submodule of $M[X]_{N_v}$.
Since $M[X]_{N_v}$ is an $S$-Noetherian module,
there exist $s \in S$ and $f_1, \dots, f_n \in A[X]$ such that
$A[X]_{N_v}s \subseteq (f_1D[X] + \cdots + f_nD[X])_{N_v}$, so we obtain
\begin{center}
$A[X]_{N_v}s \subseteq (c(f_1) + \cdots + c(f_n))[X]_{N_v}$.
\end{center}
Let $a \in A$.
Then $asg \in (c(f_1) + \cdots + c(f_n))[X]$ for some $g \in N_v$, so
$asc(g) \subseteq c(f_1) + \cdots + c(f_n)$.
Since $c(g) \in {\rm GV}(D)$,
$as \in (c(f_1) + \cdots + c(f_n))_w$ \cite[Proposition 3.5]{ywzc} (or \cite[Lemma 2.4]{hl}).
Hence $As \subseteq (c(f_1) + \cdots + c(f_n))_w$.
Thus $A$ is $S$-$w$-finite.
Consequently, $M$ is an $S$-SM-module.

(3) $\Leftrightarrow$ (4)
This equivalence comes directly from Lemma \ref{DW-module}.
\end{proof}

The following result has already been proved in \cite{kim},
but we can prove it in a different way from the proof in \cite{kim} using Theorem \ref{main2}.

\begin{corollary}{\rm (\cite[Theorem 2.11(2)]{kim})}
Let $D$ be an integral domain and
let $S$ be a multiplicative subset of $D$.
Then $D$ is an $S$-SM-domain if and only if
every $S$-$w$-finite torsion-free $w$-module
is an $S$-SM-module.
\end{corollary}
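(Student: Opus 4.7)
The plan is to use Theorem \ref{main2} twice to transfer between $S$-SM on $D$ and $M$, going through the $S$-Noetherian property of $D[X]_{N_v}$ and $M[X]_{N_v}$.

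For the converse (``$\Leftarrow$''), one just takes $M = D$, which is a torsion-free $w$-module, and is trivially $S$-$w$-finite since $D \cdot 1 \subseteq D = D_w$. The hypothesis then gives that $D$ is an $S$-SM-module, hence an $S$-SM-domain.

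For the forward direction, assume $D$ is an $S$-SM-domain and let $M$ be an $S$-$w$-finite torsion-free $w$-module, so there exist $s \in S$ and a finitely generated $D$-submodule $F \subseteq M$ with $Ms \subseteq F_w$. The first step is to show that $M[X]_{N_v}$ is an $S$-finite $D[X]_{N_v}$-module. Working coefficient-wise we get $M[X]s \subseteq F_w[X]$, and by Lemma \ref{3.7}(1), $F_w[X] = (F[X])_W$. After localizing at $N_v$, I claim $((F[X])_W)_{N_v} = F[X]_{N_v}$. The nontrivial inclusion uses that every $J \in {\rm GV}(D[X])$ meets $N_v$: given $J = (h_1, \dots, h_m)$, the polynomial $h = h_1 + h_2 X^{\deg(h_1)+1} + \cdots + h_m X^{\sum_{j<m} \deg(h_j) + m - 1}$ satisfies $c(h) = c(h_1) + \cdots + c(h_m)$ and has $c(h)_v = D$, so $h \in J \cap N_v$ (this is the same trick already used in the proof of $(2) \Rightarrow (4)$ of Theorem \ref{main2}). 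Thus $M[X]_{N_v}\, s \subseteq F[X]_{N_v}$, and $F[X]_{N_v}$ is a finitely generated $D[X]_{N_v}$-module.

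The second step is to upgrade $S$-finite to $S$-Noetherian. By Theorem \ref{main2} applied to $D$, the ring $D[X]_{N_v}$ is $S$-Noetherian, so the finitely generated module $F[X]_{N_v}$ is $S$-Noetherian (a standard fact from \cite{anderson}). Now for any $D[X]_{N_v}$-submodule $A \subseteq M[X]_{N_v}$, the submodule $As \subseteq F[X]_{N_v}$ is $S$-finite, so there exist $t \in S$ and a finitely generated $H \subseteq As \subseteq A$ with $Ast \subseteq H$; since $st \in S$, $A$ is $S$-finite. Hence $M[X]_{N_v}$ is an $S$-Noetherian $D[X]_{N_v}$-module, and applying the equivalence $(1) \Leftrightarrow (4)$ of Theorem \ref{main2} once more gives that $M$ is an $S$-SM-module.

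The main obstacle is the localization identity $((F[X])_W)_{N_v} = F[X]_{N_v}$, but since it rests on exactly the GV-to-$N_v$ construction already carried out inside Theorem \ref{main2}, no new technology is required; everything else is bookkeeping of the implications $Ms \subseteq F_w \;\rightsquigarrow\; M[X]_{N_v}\,s \subseteq F[X]_{N_v}$ and the standard $S$-finite-over-$S$-Noetherian step.
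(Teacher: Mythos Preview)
Your proof is correct and follows essentially the same route as the paper's: both show that $M[X]_{N_v}$ is $S$-finite over the $S$-Noetherian ring $D[X]_{N_v}$, deduce that $M[X]_{N_v}$ is $S$-Noetherian, and then apply the implication $(4)\Rightarrow(1)$ of Theorem~\ref{main2}. The only differences are cosmetic: where the paper cites \cite[Lemma 2.4(3)]{chang1} for $L_w[X]_{N_v}=L[X]_{N_v}$, \cite[Theorem 2.8]{kim} for $D[X]_{N_v}$ being $S$-Noetherian, and \cite[Proposition 2.1]{hamed1} for the $S$-finite-over-$S$-Noetherian step, you derive these in-line via Lemma~\ref{3.7}(1) together with the GV-to-$N_v$ trick, Theorem~\ref{main2} applied to $M=D$, and a short direct argument, respectively.
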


\begin{proof}
Suppose that $D$ is an $S$-SM-domain and
let $M$ be an $S$-$w$-finite torsion-free $w$-module as a $D$-module.
Then there exist $s \in S$ and
a finitely generated $D$-submodule $L$ of $M$
such that $Ms \subseteq L_w$,
so $M[X]_{N_v}s \subseteq L_w[X]_{N_v} = L[X]_{N_v}$,
where the equality comes from \cite[Lemma 2.4(3)]{chang1}.
Hence $M[X]_{N_v}$ is $S$-finite.
Since $D[X]_{N_v}$ is an $S$-Noetherian domain \cite[Theorem 2.8]{kim},
$M[X]_{N_v}$ is an $S$-Noetherian $D[X]_{N_v}$-module \cite[Proposition 2.1]{hamed1}.
Thus by Theorem \ref{main2}, $M$ is an $S$-SM-module.
The converse is obvious.
\end{proof}

The next result recovers the fact that
every surjective endomorphism of an SM-module
is an isomorphism \cite[Theorem 2.10]{chang1}.

\begin{proposition}
Let $D$ be an integral domain,
$S$ a multiplicative subset of $D$ and
$M$ a torsion-free $w$-module as a $D$-module.
If $M$ is an $S$-SM-module and
$\varphi : M \to M$ is a $D$-module epimorphism,
then $\varphi$ is an isomorphism.
\end{proposition}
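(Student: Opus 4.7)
The plan is to transpose the classical Noetherian proof of Fitting type --- a surjective endomorphism of a Noetherian module is injective --- into the $S$-$w$-setting. Put $K_n := \ker(\varphi^n)$ and $K := \bigcup_{n \ge 1} K_n$. I aim to produce a single $s \in S$ and a single integer $N \ge 1$ such that $Ks \subseteq K_N$. Once this is available, for any $x \in \ker(\varphi)$ the surjection $\varphi^N : M \twoheadrightarrow M$ supplies $y \in M$ with $\varphi^N(y) = x$; then $\varphi^{N+1}(y) = \varphi(x) = 0$ places $y$ in $K_{N+1} \subseteq K$, so $sy \in K_N$ and $sx = \varphi^N(sy) = 0$. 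Since $D$ is a domain, $s \neq 0$, and $M$ is torsion-free, this forces $x = 0$, proving injectivity.

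To obtain such $s$ and $N$, I apply Remark~\ref{N_w -> N}(2) and (3) to the submodule $K$ (assuming $K \neq 0$; otherwise there is nothing to prove): there exist $s \in S$ and $a_1, \dots, a_m \in K$ with $Ks \subseteq (a_1 D + \cdots + a_m D)_w$, and choosing $N$ so that $a_1, \dots, a_m \in K_N$ gives $Ks \subseteq (K_N)_w$. The one genuinely nontrivial step --- which I expect to be the main obstacle --- is bridging the gap between $(K_N)_w$ and $K_N$ itself. The key observation is that each $K_n$ is automatically a $w$-submodule of $M$: if $x \in (K_n)_w$, then $xJ \subseteq K_n$ for some $J \in \mathrm{GV}(D)$, and hence $j\varphi^n(x) = \varphi^n(jx) = 0$ for every $j \in J$; choosing any nonzero $j \in J$ and using torsion-freeness of $M$ over the integral domain $D$ forces $\varphi^n(x) = 0$, i.e., $x \in K_n$. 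With this in hand $(K_N)_w = K_N$, so the displayed inclusion becomes $Ks \subseteq K_N$ and the argument closes as above. No further machinery from the paper is needed --- only Remark~\ref{N_w -> N} and the standing torsion-free hypothesis on $M$.
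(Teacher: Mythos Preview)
Your argument is correct and follows essentially the same route as the paper's proof: both produce $s\in S$ and $N$ with $sK_{N+1}\subseteq K_N$, then use surjectivity of $\varphi^N$ and torsion-freeness to kill $\ker(\varphi)$. The only difference is packaging: the paper outsources the two key ingredients---that each $\ker(\varphi^n)$ is a $w$-submodule and that the chain $S$-stabilizes---to \cite[Lemma 2.9]{chang1} and \cite[Theorem 1]{hamed} respectively, whereas you supply both directly (the first via the torsion-free trick with a nonzero $j\in J$, the second by applying $S$-$w$-finiteness to the union $K$ and invoking Remark~\ref{N_w -> N}). Your version is thus more self-contained; otherwise the proofs coincide.
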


\begin{proof}
For each $n \geq 2$, let $\varphi^n = \varphi^{n-1} \circ \varphi$.
Then $\varphi^n$ is a $D$-module homomorphism, so
${\rm Ker}(\varphi^n)$ is a $w$-submodule of $M$
for all $n \geq 2$ \cite[Lemma 2.9]{chang1}.
Hence we obtain an ascending chain
${\rm Ker}(\varphi) \subseteq {\rm Ker}(\varphi^2) \subseteq \cdots$
of $w$-submodules of $M$.
Since $M$ is an $S$-SM-module,
there exist $s \in S$ and $k \in \mathbb{N}$ such that
${\rm Ker}(\varphi^k)s \subseteq {\rm Ker}(\varphi^n)$ for all $k \geq n$ \cite[Theorem 1]{hamed}.
Let $x \in {\rm Ker}(\varphi)$.
Since $\varphi^n$ is surjective,
there exists an element $m \in M$ such that $\varphi^n(m) = x$, so
$\varphi^{n+1}(m) = \varphi(x) = 0$.
Therefore $m \in {\rm Ker}(\varphi^{n+1})$,
which implies that $ms \in {\rm Ker}(\varphi^n)$.
Hence $\varphi^n(m)s = \varphi^n(ms) = 0$.
Since $M$ is torsion-free, $\varphi^n(m) = 0$.
Thus $\varphi$ is an isomorphism.
\end{proof}

\end{document}